\documentclass[12pt]{amsart}
\usepackage{geometry}                
\geometry{letterpaper}                   
\usepackage{graphicx}
\usepackage{amssymb}
\usepackage{epstopdf}
\usepackage{pst-node}
\usepackage{tikz-cd} 
\newtheorem{thm}{Theorem}[section]

\newtheorem{lemma}[thm]{Lemma}

\theoremstyle{definition}

\newtheorem{defn}[thm]{Definition}
\newtheorem{remark}{Remark}

\title{Local indicability in the presence of diagrammatic reducibility}
\author{Jens Harlander and Stephan Rosebrock}

\begin{document}

\begin{abstract} If a complex $X$ is a subcomplex of a diagrammatically reducible 2-complex $Y$ that has locally indicable fundamental group, then $X$ has locally indicable fundamental group. This is a consequence of the Corson-Trace characterization of diagrammatic reducibility. In this paper we use a Corson-Trace like characterization of diagrammatic reducibility away from a subcomplex to obtain a considerable stronger result. 
We apply this to the question of local indicability in the context of Whitehead's asphericity conjecture. We show that an injective labeled oriented tree (LOT) that is diagrammatically reducible of degree 2, and all its quotients are as well, is locally indicable. 
\end{abstract}

\maketitle

\noindent MCS classification: 57M07, 57M05, 20F05, 20F06, 20F65

\section{Introduction}

A group is {\em locally indicable} if every non-trivial finitely generated subgroup surjects onto $\mathbb Z$. Local indicability is not a topologically hereditary property, however in conjunction with diagrammatic reducibility it is: If a complex $X$ is a subcomplex of a diagrammatically reducible 2-complex $Y$ that has locally indicable fundamental group, then $X$ is diagrammatically reducible and has locally indicable fundamental group (for background on topological and combinatorial vocabulary we refer the reader to the next section). This is a consequence of the Corson-Trace characterization of diagrammatic reducibility. In this paper we use a Corson-Trace like characterization of diagrammatic reducibility away from a subcomplex (see \cite{HarRoseDDR}) to obtain a considerable stronger result. 

We apply this to investigate local indicability of LOT-groups. The acronym LOT stands for {\em labeled oriented tree}. LOT complexes are spines of ribbon 2-disc complements and include the class of spines of classical knot complements. Background on labeled oriented trees is provided in the last section of the paper. The unresolved asphericity question of LOT complexes and its relevance to Whitehead's asphericity question has a long history. See Berrick-Hillman \cite{BerrickHillman}, Bogley \cite{Bogley}, and Rosebrock \cite{Ro18}. It is known that local indicability of the LOT group implies asphericity of the LOT complex (see Howie \cite{Howie}). Asphericity of injective LOT complexes has been established in \cite{HarRose2017}, and a stronger asphericity condition has been shown in \cite{HarRose2020}. This paper is motivated by the question whether injective LOT groups are locally indicable. We show that a reduced injective LOT that satisfies the combinatorial asphericity condition DR(2), and all its quotients do as well, is locally indicable.

\section{Preliminaries}

A map $f\colon X\to Y$ between complexes is {\em cellular} if it maps cells to cells
and it is {\em combinatorial} if it maps the interior of a cell in $X$ homeomorphically to the interior of a cell in $Y$. A complex $X$ is {\em combinatorial} if the attaching maps of cells are combinatorial. All complexes in this paper are assumed to be connected and combinatorial, with fixed cell orientations, unless stated otherwise. 

If $v$ is a vertex of a 2-complex $X$ the {\em link at v}, $lk(v,X)$, is the boundary of a regular neighborhood of $v$. It is a graph whose edges come from the corners of 2-cells. For that reason we refer to the edges of $lk(v,X)$ as {\em corners at $v$}. An oriented edge $e$ of $X$ with initial vertex $v$ and terminal vertex $w$ contributes a vertex $e^+\in lk(v,X)$ and a vertex $e^-\in lk(w,X)$.
If $v$ is the only vertex of $X$ we sometimes write $lk(X)$ instead of $lk(v,X)$.

An edge path $e_1\dots e_k$ in a graph is {\em reducible} if $e_j=\bar e_{j-1}$, for some $j$, where $\bar e$ denotes the edge $e$ with opposite orientation. Otherwise the path is called {\em reduced}. A {\em spherical diagram} over a 2-complex $X$ is a combinatorial map $f\colon S\to X$, where $S$ is the 2-sphere with a combinatorial cell structure. The spherical diagram is {\em reduced} if for every vertex $v\in S$ the edge path $f\colon lk(v,S)\to X$ is reduced. If the spherical diagram is not reduced, then $S$ contains an edge $e$ so that the two 2-cells containing $e$ are mapped to the same 2-cell in $X$ with opposite orientation. Then the edge $e$ is called a {\em folding edge}. $X$ is {\em diagrammatically reducible} (DR) if every spherical diagram over $X$ is reducible.

A subcomplex $L\subseteq X$ is {\em full} if every cell in $X$ whose boundary is in $L$ is already in $L$. Let $X$ be a 2-complex with a full subcomplex $L$. Then $X$ is {\em DR away from $L$} if every spherical diagram $f\colon S\to X$ that contains an edge $e$ so that $f(e)\not\in L$, also contains a folding edge $e'$ so that $f(e')\not\in L$. One consequence of DR away from $L$ is the fact that the inclusion $L\to X$ induces an injection on fundamental groups (Theorem 2.2 in \cite{HarRoseDDR}). 

The following result is due to Corson-Trace \cite{CorsonTrace}.

\begin{thm}\label{thm:CT} Assume the 2-complex $X$ is DR. Let $\tilde X$ be its universal cover. Then every finite subcomplex of $\tilde X$ collapses into the 1-skeleton $\tilde X^{(1)}$.
\end{thm}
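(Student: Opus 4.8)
The plan is to deduce the statement from a local fact about free faces and then run an induction. First I would reduce to the following claim: if $X$ is DR and $K\subseteq\tilde X$ is a finite subcomplex containing at least one 2-cell, then $K$ has a \emph{free face} --- a 1-cell $e$ lying in the boundary of exactly one 2-cell $\sigma$ of $K$ and traversed exactly once by the attaching map of $\sigma$. Granting the claim, the theorem follows by induction on the number of 2-cells of $K$: the elementary collapse across $(e,\sigma)$ produces a subcomplex $K'\subseteq\tilde X$ with one fewer 2-cell, so by induction $K'$ collapses into $\tilde X^{(1)}$, and hence so does $K$; the base case, no 2-cells, is immediate since then $K=K^{(1)}\subseteq\tilde X^{(1)}$.

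To prove the claim I would argue by contradiction, fixing a finite $K\subseteq\tilde X$ with a 2-cell and no free face, and (helpfully) with the least possible number of 2-cells. Discarding the 1-cells and vertices of $K$ that lie in no 2-cell changes neither hypothesis nor conclusion, so we may assume every 1-cell of $K$ lies in the boundary of at least two 2-cells of $K$. The aim is to manufacture from this configuration a \emph{reduced} spherical diagram $f\colon S\to X$, contradicting diagrammatic reducibility of $X$. The raw material is the composite $K\hookrightarrow\tilde X\xrightarrow{p}X$; the sphere $S$ is to be assembled from finitely many copies of the 2-cells of $K$, glued in pairs along 1-cells. The absence of free faces is exactly what makes such a pairing possible --- each 1-cell carries at least two incident 2-cell corners --- and by taking enough copies one can choose the pairing so that no folding edge arises: at a given 1-cell one simply never matches two corners that lie over the same 2-cell of $X$ with opposite induced orientations, which becomes available once every ``label-and-orientation'' type occurs with even multiplicity. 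Simple connectivity of $\tilde X$ (equivalently: $X$ DR implies $X$ aspherical, so $\tilde X$ is contractible) would then be used to organize the vertex links into circles and to force the assembled closed surface to contain a spherical component.

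The crux is showing that such a reduced spherical diagram exists at all: ``no free face'' guarantees only two incident 2-cells at each 1-cell, not that these incidences thread together into a globally coherent, fold-free $2$-sphere, and the naive homological substitute genuinely fails --- the dunce hat is its own universal cover, is a finite complex with no free face, yet carries no nonzero $2$-cycle over $\mathbb{Z}/2$ --- so one really must use the full strength of diagrammatic reducibility together with the minimality of $K$. I expect the cleanest route is an extremal argument in the style of minimal-area diagrams: among all combinatorial maps of closed surfaces into $X$ assembled in this way from the 2-cells of $K$, take one of least complexity; minimality should then force the domain to contain a spherical component (positive-genus pieces can be compressed along curves that $f$ annihilates, using that minimal-area van Kampen diagrams over a DR complex are reduced) and force the diagram to be reduced (a folding edge could otherwise be cancelled, lowering the complexity); the resulting reduced spherical diagram contradicts the diagrammatic reducibility of $X$. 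The routine parts --- setting up the elementary collapses, verifying that deleting free $1$-cells is harmless, and the orientation bookkeeping in the links --- I would leave to direct verification.
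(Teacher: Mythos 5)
The paper does not prove this statement at all: it is quoted verbatim from Corson--Trace \cite{CorsonTrace}, so your proposal has to be judged on its own merits rather than against an in-paper argument. Your reduction is the right one (induct on elementary collapses, so that everything comes down to: a finite subcomplex $K\subseteq\tilde X$ containing a 2-cell and having no free face forces a reduced spherical diagram over $X$), and you correctly identify that claim as the crux. But the crux is exactly where the proposal stops being a proof. The pairing construction you describe (take enough copies of the 2-cells of $K$ so every side occurs with even multiplicity, then match sides at each edge avoiding mirror pairs, which is possible since no edge is free) does produce a fold-free \emph{closed surface} diagram, but in general one of positive genus, and a reduced diagram on a positive-genus surface does not contradict DR (the identity map of the torus complex, which is DR, is such a diagram). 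So everything hinges on your compression step, and that step is not justified: to lower genus you cut along a curve and cap with van Kampen diagrams, and the caps (which need not be assembled from 2-cells of $K$, so your minimization class is not even closed under the move) create new folds along the cutting curve; cancelling those folds can annihilate 2-cells wholesale, and ``the diagram reduces to nothing'' is precisely the outcome DR permits, so no contradiction can be extracted from minimality without a genuinely new idea. Likewise, contractibility of $\tilde X$ does not ``force the assembled closed surface to contain a spherical component''; nothing in the sketch rules out that every fold-free object you can build has positive genus in every component. In effect, the assertion that a free-face-free finite subcomplex of $\tilde X$ yields a reduced \emph{sphere} with at least one 2-cell is essentially equivalent to the theorem itself, so the proposal is circular at its decisive point.

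The minor parts are fine: the induction scheme, the harmlessness of discarding 1-cells lying in no 2-cell, the observation that a fold downstairs corresponds to a fold upstairs, and the dunce-hat warning against a purely homological shortcut are all correct. But as it stands the argument proves only that such a $K$ admits a fold-free closed surface diagram, not a reduced spherical one, and bridging that gap is the actual content of the Corson--Trace theorem; you would need either to reproduce their argument or supply a genuinely different mechanism (not sketched here) for extracting a reduced sphere.
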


The following relative version of the Corson-Trace result was shown in \cite{HarRoseDDR}, Theorem 3.4.

\begin{thm}\label{thm:relCT} Let $X$ be a 2-complex, $p\colon \tilde X\to X$ its universal covering, and $L$ be a full subcomplex of $X$. Assume $X$ is DR away from $L$. Then every finite subcomplex of $\tilde X$ collapses into $p^{-1}(L)\cup \tilde X^{(1)}$, and $p^{-1}(L)$ is fixed under the collapsing process.
\end{thm}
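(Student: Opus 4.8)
The plan is to adapt the proof of the absolute Corson--Trace theorem (Theorem~\ref{thm:CT}), carrying the subcomplex $L$ through the bookkeeping at every step. Write $\tilde L=p^{-1}(L)$. The first observation is that $\tilde L$ is a \emph{full} subcomplex of $\tilde X$: a cell of $\tilde X$ whose boundary lies in $\tilde L$ projects under $p$ to a cell of $X$ whose boundary lies in $L$, hence lies in $L$ by fullness of $L$, hence lifts back into $\tilde L$. With this in hand I would argue by induction on the number $n$ of $2$-cells of a given finite subcomplex $K\subseteq\tilde X$ that do \emph{not} lie in $\tilde L$. If $n=0$, every $2$-cell of $K$ lies in $\tilde L$ and every cell of $K$ of dimension $\le 1$ lies in $\tilde X^{(1)}$, so $K\subseteq\tilde L\cup\tilde X^{(1)}$ already and there is nothing to collapse (so $\tilde L$ is fixed vacuously).

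For the inductive step it suffices to find a free $1$-cell face $e$ of $K$ with $e\notin\tilde L$. Indeed, since $\tilde L$ is a subcomplex, the unique $2$-cell $d$ of $K$ having $e$ on its boundary satisfies $d\notin\tilde L$ as well (else $e\subseteq\partial d\subseteq\tilde L$), so the elementary collapse $K\searrow K\smallsetminus(e\cup d)$ removes one $2$-cell lying outside $\tilde L$ and touches no cell of $\tilde L$; applying the induction hypothesis to $K\smallsetminus(e\cup d)$ then finishes the collapse while keeping $\tilde L$ fixed. Thus the theorem reduces to the following claim: \emph{if a finite subcomplex $K\subseteq\tilde X$ has a $2$-cell outside $\tilde L$, then it has a free $1$-cell face outside $\tilde L$.}

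To prove the claim I would argue by contradiction, along the lines of Corson--Trace but relative to $\tilde L$. If $K$ has a $2$-cell outside $\tilde L$ but no free $1$-cell face outside $\tilde L$, then every $1$-cell of $K$ that lies outside $\tilde L$ and meets a $2$-cell of $K$ is incident to the $2$-cells of $K$ at least twice, counted with multiplicity. Using this, I would run the Corson--Trace diagram construction away from $\tilde L$: starting from a $2$-cell $d\notin\tilde L$ of $K$, viewed as a polygon mapping to $\tilde X$, I would successively glue on further $2$-cells of $K$ along $1$-cells, invoking the ``incident at least twice away from $\tilde L$'' condition to guarantee that every folding edge created along the way has image in $\tilde L$, while folds over $\tilde L$ are permitted freely. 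Simple-connectedness of $\tilde X$ should force this process to close up to a combinatorial $2$-sphere $f\colon S\to\tilde X$, and fullness of $\tilde L$ should prevent $d$ from being absorbed into the part of $S$ mapping into $\tilde L$; then $p\circ f\colon S\to X$ is a spherical diagram containing an edge with image outside $L$ but no folding edge with image outside $L$, contradicting the hypothesis that $X$ is DR away from $L$. The final clause, that $p^{-1}(L)$ is fixed under the collapsing process, is then automatic, since every elementary collapse used above has both of its cells outside $\tilde L$.

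The main obstacle is the relative diagram construction in the claim: the absolute Corson--Trace argument produces a genuinely reduced spherical diagram, whereas here I must produce one that is reduced only away from $L$ and that still genuinely ``sees'' the cell $d$. So the delicate points are the bookkeeping that every newly created folding edge lands in $\tilde L$, the use of fullness to keep this from sweeping $d$ into $\tilde L$, and the verification that the surface being assembled is a $2$-sphere rather than a closed surface of higher genus. A tempting shortcut is to deduce the statement from Theorem~\ref{thm:CT} applied to an auxiliary $2$-complex obtained from $X$ by collapsing or coning off $L$; but such a complex has a different fundamental group, so its universal cover does not match $\tilde X$, and the direct adaptation above seems the more reliable route.
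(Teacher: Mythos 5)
A preliminary remark: the paper does not prove this statement at all; it quotes it as Theorem 3.4 of \cite{HarRoseDDR}, so the proof your attempt should be compared with is the one in that reference (a relativization of the Corson--Trace argument \cite{CorsonTrace}). Your structural reduction is correct and is the right frame: $p^{-1}(L)$ is full in $\tilde X$, and it suffices to show that any finite subcomplex $K\subseteq\tilde X$ containing a $2$-cell outside $p^{-1}(L)$ has a free edge outside $p^{-1}(L)$; the corresponding elementary collapse removes only cells outside $p^{-1}(L)$, so induction on the number of such $2$-cells gives the collapse into $p^{-1}(L)\cup\tilde X^{(1)}$ with $p^{-1}(L)$ fixed throughout.

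The genuine gap is that the proof of that claim is the entire content of the theorem, and your proposal does not contain it. Declaring that you would ``run the Corson--Trace diagram construction away from $\tilde L$,'' gluing $2$-cells of $K$ along edges and letting simple-connectedness of $\tilde X$ ``force the process to close up to a combinatorial $2$-sphere,'' is a heuristic rather than an argument: nothing guarantees that such a gluing process terminates; nothing makes the vertex links close up into circles so that a closed surface is produced at all; nothing ensures the closed surface is a sphere rather than of higher genus (a diagram on a higher-genus surface would not contradict DR away from $L$); the ``incident at least twice'' condition at an edge outside $p^{-1}(L)$ does not prevent the second incidence from being the same $2$-cell traversing that edge with opposite orientation, which would create exactly the kind of non-$L$ folding edge you must avoid; and the appeal to fullness to keep the starting cell $d$ from being ``absorbed,'' so that the projected diagram really has an edge mapping outside $L$, is left entirely vague. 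These are precisely the points where Corson--Trace (in the absolute case, Theorem \ref{thm:CT}) and Theorem 3.4 of \cite{HarRoseDDR} (in the relative case) do the actual work, by a construction that is not the iterative gluing you describe. You candidly flag these obstacles yourself, but as written the proposal reduces the theorem to an unproved claim that is essentially equivalent to the theorem, so it cannot be accepted as a proof; the honest route is to follow (or carefully relativize) the argument in \cite{HarRoseDDR} rather than to assert that it relativizes.
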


\begin{defn}
    If $f\colon S\to X$ is a spherical diagram and $s$ is an edge of $S$ then we call $f(s)=e$, an edge in $X$, the {\em label} carried by $s$. We say $X$ is DR($k$) if $X$ is DR away from any set of $k-1$ or fewer edges. 
\end{defn}

DR(1) simply means DR and DR($k$) implies DR($k-1$). Note that DR($k$) implies that every spherical diagram that contains $j\le k$ edges with mutually distinct labels also contains $j$ folding edges with mutually distinct labels. 
This can be seen as follows: Suppose $X$ is DR($k$) and that $f\colon S\to X$ carries mutually distinct labels $e_1,\ldots e_j$ where $j\le k$. If $k=1$ then $X$ is DR and the diagram contains a folding edge. If $k>1$ then $X$ is DR($k-1$) and therefore contains folding edges with distinct labels $e'_1,\ldots, e'_{j-1}$ by induction. Since the diagram contains $j$ edges with distinct labels, it contains an edge with a label $e$ not equal to any of the $e_1',\ldots, e'_{j-1}$. Since $X$ is DR away from $e_1',\ldots, e'_{j-1}$ it contains a folding edge labeled $e'$ not equal to any of the $e_1',\ldots, e'_{j-1}$. Thus $e'_1,\ldots, e'_{j-1}, e'$ are $j$ distinct labels on folding edges.

It was shown in \cite{HarRoseDDR}, Theorem 2.6,  that if $X$ is the presentation complex for a 1-relator presentation $P=\langle x_1,\ldots ,x_n \ |\ r\rangle$, where $r$ is reduced, involves all generators, and is not a proper power, then $X$ is DR($n$). This implies that any set of $n-1$ generators generates a free subgroup of rank $n-1$, which is the content of the Freiheitssatz.\\

Let $X$ be a 2-complex.  If we assign numbers $\omega(c)\in \mathbb R$, called angles (or weights), to the corners $c$ of the 2-cells of $X$ we arrive at an {\em angled 2-complex}. Curvature in an angled 2-complex is defined in the following way. If $v$ is a vertex of $X$ then $\kappa(v,X)$, the curvature at $v$, is
$$\kappa(v,X)=2-\chi(lk(v,X))-\sum \omega(c_i),$$ where the sum is taken over all the corners at $v$. If $d$ is a 2-cell of $X$ then $\kappa(d,X)$, the curvature of $d$, is 
$$\kappa(d,X)=\sum \omega(c_j)-(|\partial d|-2),$$ where the sum is taken over all the corners in $d$ and $|\partial d|$ is the number of edges in the boundary of the 2-cell. The combinatorial Gauss-Bonnet Theorem states that
$$2\chi(X)=\sum_{v\in X} \kappa(v,X) + \sum_{d\in X} \kappa(d,X).$$
This was first proven by Ballmann and Buyalo \cite{BallBuy}, and later observed by McCammond and Wise \cite{McCammondWise}.
Suppose we have a combinatorial map $X\to Y$ between 2-complexes. If $Y$ is an angled 2-complex then the angles in the 2-cells of $Y$ can be pulled back to make $X$ into an angled 2-complex. We call this angle structure on $X$ the one {\em induced} by the combinatorial map. 

\begin{defn}(Weight test, Gersten \cite{Ger87})
Let $X$ be an angled 2-complex. Then $X$ {\em satisfies the weight test} if 
\begin{enumerate}
    \item the curvature of every 2-cell is $\le 0$;
    \item for every vertex $v$: If $c_1\cdots c_n$ is a reduced cycle in $lk(v,X)$, then\\ $2-\sum_{i=1}^n \omega(c_i)\le 0$. 
\end{enumerate}
\end{defn}

If an angled 2-complex satisfies the weight test then it is DR. This result is due to Gersten \cite{Ger87}. Gersten called the numbers $\omega(c)$ weights and not angles, because they do not have to be $\ge 0$. An earlier version of the weight test is due to Sieradski \cite{S83}. He considered the case where the angles take on only the values $0$ and $1$, and called his test the coloring test. Wise showed in \cite{Wise04} that if $X$ satisfies the coloring test (he called it ``the Sieradski weight test") then it has the non-positive immersions property and hence $\pi_1(X)$ is locally indicable. This is not true for the general weight test. For example, it can be shown that the standard 2-complex $X$ of the Higman presentation 
$$\langle a, b, c, d\ |\ aba^{-1}=b^2, bcb^{-1}=c^2, cdc^{-1}=d^2, dad^{-1}=a^2 \rangle$$
can be made to satisfies the weight test, but $\pi_1(X)$ is infinite and perfect, and $\chi(X)=1>0$. So $X$ can not be made to satisfy the coloring test.

An angled 2-complex where all angles are either 0 or 1 is called a {\em zero/one angled 2-complex}.
We denote by 
$lk_0(v,X)$ the subgraph of $lk(v,X)$ consisting of the vertices of $lk(v,X)$ together with the corners with angle $0$. 

\begin{thm}\label{thm:npi} A zero/one-angled 2-complex $X$ satisfies the coloring test if and only if 
\begin{enumerate}
\item the curvature of every 2-cell is $\le 0$;
\item $lk_0(v,X)$ is a forest for every vertex $v$;
\item a corner with angle $1$ does not have both its vertices in a single connected component of $lk_0(v,X)$.
\end{enumerate}
\end{thm}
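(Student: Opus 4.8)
The plan is to prove Theorem~\ref{thm:npi} by unwinding the definition of the coloring test, which is the weight test specialized to zero/one angles, and showing that the three combinatorial conditions on links and 2-cells are exactly what the two clauses of the weight test become in this setting. Condition (1) is literally clause (1) of the weight test, so the work is entirely in translating clause (2): for every vertex $v$ and every reduced cycle $c_1\cdots c_n$ in $lk(v,X)$, we need $2-\sum_{i=1}^n\omega(c_i)\le 0$, i.e. $\sum_{i=1}^n\omega(c_i)\ge 2$. Since each $\omega(c_i)\in\{0,1\}$, this says every reduced cycle in $lk(v,X)$ must contain at least two corners of angle $1$.

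First I would fix a vertex $v$ and abbreviate $\Gamma=lk(v,X)$, $\Gamma_0=lk_0(v,X)$. The forward direction: assume the coloring test holds. If $\Gamma_0$ contained a cycle, that cycle is a reduced cycle in $\Gamma$ all of whose corners have angle $0$, so $\sum\omega(c_i)=0<2$, contradicting clause (2); hence $\Gamma_0$ is a forest, giving (2). For (3), suppose a corner $c$ with $\omega(c)=1$ has both endpoints in one component $T$ of $\Gamma_0$; since $T$ is a tree, there is a reduced path in $T$ joining the two endpoints of $c$, and adjoining $c$ closes it to a reduced cycle in $\Gamma$ whose only angle-$1$ corner is $c$, so $\sum\omega(c_i)=1<2$ — again a contradiction. (One should check the closed-up path is genuinely reduced, which holds because $c\notin\Gamma_0$ so $c$ is not the reverse of any edge on the tree path, and the tree path itself is reduced; a short argument handles the degenerate case where the two endpoints of $c$ coincide, which would make $c$ a loop and an immediate violation.)

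For the converse, assume (1), (2), (3) and take any reduced cycle $\gamma=c_1\cdots c_n$ in $\Gamma$. I want to show at least two of the $c_i$ have angle $1$. If none does, all $c_i$ lie in $\Gamma_0$, so $\gamma$ is a cycle in the forest $\Gamma_0$ — impossible. If exactly one, say $c_j$, has angle $1$, then deleting $c_j$ leaves the reduced path $c_{j+1}\cdots c_n c_1\cdots c_{j-1}$ inside $\Gamma_0$ joining the two endpoints of $c_j$; both endpoints of $c_j$ therefore lie in the same component of $\Gamma_0$, contradicting (3). Hence $\sum\omega(c_i)\ge 2$, clause (2) holds, and together with (1) the weight test (coloring test) is satisfied.

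The main obstacle, such as it is, is bookkeeping about reducedness of the paths and cycles one builds in the link graph — in particular making sure that concatenating a tree path in $\Gamma_0$ with a single angle-$1$ corner (or with the complementary arc of a cycle) yields a \emph{reduced} closed path, handling orientations of corners correctly, and dealing with degenerate cases (a loop corner, or a cycle that backtracks through $v$). These are routine graph-theoretic checks rather than a real difficulty; no curvature computation beyond the trivial observation $\omega(c)\in\{0,1\}$ is needed, and clause (1) passes through verbatim.
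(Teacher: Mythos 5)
Your proof is correct: it is exactly the routine unwinding of the weight test with $\{0,1\}$ angles that the paper dismisses as ``straightforward'' (the paper gives no further details), and your handling of cyclic reducedness and the loop-corner degeneracy is sound. Nothing further is needed.
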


The proof is straightforward. We end this section with two results that we will need in applications later on. Given an angled 2-complex $X$ and a path $\alpha\in lk(v,X)$ we define $\omega (\alpha)$ to be the sum of the weights of the corners that appear in $\alpha $. We define $\omega(lk(v,X))$ to be the sum of angles of all corners of $lk(v,X)$.

\begin{thm}\label{thm:DR(2)+} Let $X$ be an angled 2-complex with a single vertex $v$, where all angles are $\ge 0$. Assume that $X$ satisfies the weight test and that for every edge $e$, should there be a path $\alpha$ in $lk(v,X)$ that connects $e^+$ and $e^-$, its weight $\omega(\alpha)\ge 2$. Then $X$ is DR(2).
\end{thm}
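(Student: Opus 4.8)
The plan is to run a combinatorial Gauss-Bonnet argument in the spirit of Gersten's proof that the weight test implies DR, with the extra hypothesis supplying exactly the control needed to push from DR to DR(2). Using the reformulation of DR($k$) in the definition above, it suffices to prove: if $f\colon S\to X$ is a spherical diagram all of whose folding edges carry one and the same label $e$, then every edge of $S$ carries the label $e$. (Since $X$ satisfies the weight test it is DR by Gersten's theorem, so $S$ has a folding edge, and it is labeled $e$; this also shows the case ``no folding edges'' does not arise.) Assume for contradiction that $S$ also carries an edge labeled by some $e'\neq e$, and among all such $S$ choose one with the fewest $2$-cells. Pull the angle structure of $X$ back along $f$ to make $S$ an angled $2$-complex; all its angles are $\ge 0$ because those of $X$ are.

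First I would localize the argument at the vertices of $S$. By weight test condition (1) every $2$-cell of $S$ has curvature $\le 0$, so the combinatorial Gauss-Bonnet Theorem forces $4=2\chi(S)\le \sum_{u}\kappa(u,S)$ and hence the existence of a vertex $u$ with $\kappa(u,S)>0$. Since $lk(u,S)$ is a circle, $\kappa(u,S)=2-\omega(\gamma)$, where $\gamma$ denotes the closed edge path $f\colon lk(u,S)\to lk(v,X)$ and $\omega(\gamma)$ is the sum of the induced weights of its corners. If $\gamma$ were reduced, weight test condition (2) would give $\omega(\gamma)\ge 2$, i.e. $\kappa(u,S)\le 0$; so $\gamma$ is not reduced. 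The structural point to record is that a backtrack of $\gamma$ can occur only at a vertex of $lk(u,S)$ coming from an $e$-labeled edge of $S$, since at such an edge the two incident $2$-cells form a folding pair and $S$ has no folding edge labeled other than $e$. Call these the $e$-vertices of $\gamma$; each maps to one of the two endpoints $e^{+}$, $e^{-}$ of $e$ in $lk(v,X)$, and $\gamma$ has at least one.

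Next comes the case analysis for $\gamma$. Freely reduce the closed path $\gamma$ to a cyclically reduced closed path $\gamma'$; since all weights are $\ge 0$, $\omega(\gamma')\le\omega(\gamma)$. If $\gamma'$ is nontrivial it is a reduced cycle in $lk(v,X)$, so weight test condition (2) gives $\omega(\gamma')\ge 2$, hence $\omega(\gamma)\ge 2$ and $\kappa(u,S)\le 0$, a contradiction. Therefore $\gamma$ is null-homotopic in $lk(v,X)$. If some $e$-vertex of $\gamma$ maps to $e^{+}$ and another to $e^{-}$, then reading around the circle $lk(u,S)$ there are two $e$-vertices, consecutive among all $e$-vertices, with images $e^{+}$ and $e^{-}$; the subarc $\alpha$ of $\gamma$ between them is a path in $lk(v,X)$ from $e^{+}$ to $e^{-}$, so $\omega(\alpha)\ge 2$ by hypothesis, whence again $\omega(\gamma)\ge 2$ and $\kappa(u,S)\le 0$, a contradiction. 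This is the step that uses the new hypothesis, and together with the previous paragraph it is the promised strengthening of the weight test.

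The remaining, genuinely delicate case is that $\gamma$ is null-homotopic and all its $e$-vertices map to the same endpoint of $e$ --- equivalently, all $e$-labeled edges of $S$ incident to $u$ are coherently oriented. Here neither weight test condition nor the hypothesis on paths from $e^{+}$ to $e^{-}$ controls $\kappa(u,S)$, and this is the main obstacle. I expect to dispose of it by a local modification rather than by curvature: a null-homotopic link at $u$ forces an $e$-labeled folding pair of $2$-cells meeting at $u$, and folding $S$ there yields a spherical diagram with strictly fewer $2$-cells that still carries an edge labeled $e'\neq e$ and whose folding edges are still all labeled $e$, contradicting the minimal choice of $S$. The hard part will be the bookkeeping in this last step: verifying that such a fold is available at $u$ and, above all, that it creates no folding edge with a label different from $e$. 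Once this case is excluded, every vertex of $S$ satisfies $\kappa(u,S)\le 0$, Gauss-Bonnet gives $4\le 0$, and this contradiction proves that $X$ is DR(2).
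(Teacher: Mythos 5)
Your Gauss--Bonnet set-up and your first two cases are sound, but the proof is not complete: everything hinges on your ``genuinely delicate'' case 3, and there you only offer a plan (fold $S$ along an $e$-labeled folding pair and invoke minimality) whose decisive verification --- that the fold is available, produces again a spherical diagram still containing an $e'$-labeled edge, and creates no folding edge with a label different from $e$ --- you explicitly leave open. That verification is not routine bookkeeping: folding a spherical diagram can create new folding pairs with arbitrary labels (this is precisely why DR does not formally yield DR(2)), it can also remove the $e'$-labeled edges or degenerate the sphere. As written, case 3 is a genuine gap. A further small point: null-homotopy of $\gamma$ alone does not force its image to be a tree (consider $\gamma=\beta\bar\beta$ for a cycle $\beta$), so your free-reduction step gives less structural information than you may be implicitly using.

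The gap is avoidable, and the paper's proof shows that your case 3 never occurs. At the positively curved vertex $u$ the image $f(lk(u,S))$ is a subtree of $lk(v,X)$: an embedded cycle $\beta$ in the image would give $2\le\omega(\beta)\le\omega(lk(u,S))<2$, since distinct edges of $\beta$ are images of distinct corners at $u$ and all weights are $\ge 0$. A finite tree containing at least one edge has at least two leaves, and every vertex of the circle $lk(u,S)$ mapping onto a leaf is a backtrack, hence corresponds to a folding edge at $u$ labeled by the edge of $X$ determining that leaf. If the two leaf labels coincided, the leaves would be $e^{+}$ and $e^{-}$ and the tree path joining them would be a path of weight $<2$ from $e^{+}$ to $e^{-}$, contradicting the hypothesis. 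So there are two folding edges with distinct labels already at $u$ (in particular, in your situation where all folding edges are labeled $e$, the backtracks at $u$ cannot all lie over one endpoint of $e$), and no minimal counterexample, surgery, or induction on $2$-cells is needed. The key idea missing from your argument is that \emph{every} leaf of the image tree forces a folding edge, not merely that some backtrack exists.
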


\begin{proof} Let $f\colon S\to X$ be a spherical diagram. Since $\chi(S)=2>0$, the 2-sphere $S$ must contain a vertex $s$ of positive curvature by the combinatorial Gauss-Bonnet Theorem. Thus $\omega(lk(s,S))< 2$. It follows that $f(lk(s,S))$ is a subtree of $lk(v,X)$. This is because if $\beta$ were a reduced cycle in $f(lk(s,S))$, then $\omega(lk(s,S))\ge \omega(\beta)\ge 2$, which is false. It follows that $f(lk(s,S))$ contains at least two vertices $e_1^{\epsilon_1}$ and $e_2^{\epsilon_2}$ of valency 1, where the epsilons are in $\{ +, -\}$. Assume $e_1=e_2=e$. W.l.o.g. we assume that $e_1^{\epsilon_1}=e^+$. Then $e_2^{\epsilon_2}=e^-$ (because $e_1^{\epsilon_1}\ne e_2^{\epsilon_2}$), and we have a path $\alpha$ in $f(lk(s,S))$ connecting $e^+$ to $e^-$ such that $\omega(\alpha)\le \omega(f(lk(s,S)))<2$, which is ruled out. Thus $e_1\ne e_2$. We have shown that there are two folding edges in $S$ with distinct labels $e_1$ and $e_2$.
\end{proof}

\begin{thm}\label{thm:DR(2)zo} Let $X$ be a zero/one angled 2-complex with a single vertex $v$. Assume that $X$ satisfies the coloring test and that for every edge $e$ we have that $e^+$ and $e^-$ lie in different components of $lk_0(v,X)$. Then $X$ is DR(2) and $\pi_1(X)$ is locally indicable.
\end{thm}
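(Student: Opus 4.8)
The plan is to treat the two conclusions separately. Local indicability requires no new work: by hypothesis $X$ satisfies the coloring test, so by Wise's theorem \cite{Wise04} it has the non-positive immersions property, and therefore $\pi_1(X)$ is locally indicable. The content of the statement is thus that $X$ is DR(2), and I would prove this by adapting the proof of Theorem~\ref{thm:DR(2)+}. Note first that the coloring test implies the weight test and that in a zero/one angled complex all angles are $\ge 0$; so every hypothesis of Theorem~\ref{thm:DR(2)+} is available except for the path-weight condition, and the point is to supply a substitute for it using the subgraph $lk_0(v,X)$.

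Given a spherical diagram $f\colon S\to X$, I would argue as in Theorem~\ref{thm:DR(2)+}: the pulled-back angle structure makes every 2-cell of $S$ have curvature $\le 0$, so the combinatorial Gauss-Bonnet Theorem produces a vertex $s\in S$ with $\kappa(s,S)>0$, whence $\omega(lk(s,S))<2$. As before, condition (2) of the weight test forces $T:=f(lk(s,S))$ to be a tree, since a reduced cycle in $T$ is a reduced cycle in $lk(v,X)$ and would have weight $\ge 2$, while $\omega(T)\le \omega(lk(s,S))<2$. The new step, and the one I expect to be the crux, is to upgrade this to $T\subseteq lk_0(v,X)$: since $lk(s,S)$ is a cycle mapping onto the tree $T$, the restriction of $f$ is a closed walk on a tree, hence traverses every edge of $T$ an even number of times, in particular at least twice. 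Therefore $\omega(lk(s,S))\ge 2\,\omega(T)$, and together with $\omega(lk(s,S))<2$ and the integrality of $\omega(T)$ this forces $\omega(T)=0$; that is, every corner of $T$ has angle $0$, so $T$ is a subtree of a single connected component of $lk_0(v,X)$.

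From here I would close the argument exactly as in Theorem~\ref{thm:DR(2)+}: the tree $T$ has at least two vertices $e_1^{\epsilon_1}$ and $e_2^{\epsilon_2}$ of valency $1$, and these correspond to folding edges of $S$ carrying the labels $e_1$ and $e_2$ (identifying a leaf of $f(lk(s,S))$ with a folding edge of $S$ is the same routine point as in that proof). If $e_1=e_2=e$, then since the two leaves are distinct they must be $e^+$ and $e^-$, so $e^+$ and $e^-$ lie in one connected component of $lk_0(v,X)$, contradicting the hypothesis; hence $e_1\ne e_2$, and $S$ contains two folding edges with distinct labels, so $X$ is DR(2). The main obstacle, and the only genuine departure from the proof of Theorem~\ref{thm:DR(2)+}, is precisely that the present hypothesis --- that $e^+$ and $e^-$ lie in different components of $lk_0(v,X)$ --- is weaker than the path-weight hypothesis there, because a path crossing a single corner of angle $1$ has weight $1$; the parity observation that a closed walk on a tree covers each edge an even number of times is exactly what compensates.
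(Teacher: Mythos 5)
Your proposal is correct and follows essentially the same route as the paper: Gauss--Bonnet gives a positively curved vertex $s$ of $S$, the image of $lk(s,S)$ is shown to be a tree inside $lk_0(v,X)$, and its two leaves yield folding edges whose labels must be distinct by the hypothesis that $e^+$ and $e^-$ lie in different components of $lk_0(v,X)$. The only deviation is in how the inclusion into $lk_0(v,X)$ is obtained: the paper gets $\omega(lk(s,S))=0$ directly from Theorem~\ref{thm:npi}, while you use the weight test (implied by the coloring test) to see $f(lk(s,S))$ is a tree and then the valid parity observation that a closed walk on a tree traverses each edge an even number of times, forcing $\omega(f(lk(s,S)))=0$ --- both justifications work.
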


\begin{proof} A 2-complex that satisfies the coloring test has the non-positive immersion property and therefore has locally indicable fundamental group. See Wise \cite{Wise04}, Theorem 11.4. We are left with showing DR(2). Let $f\colon S\to X$ be a spherical diagram. Since $\chi(S)=2>0$, the sphere $S$ must contain a vertex $s$ of positive curvature by the combinatorial Gauss-Bonnet Theorem. Thus $\omega(lk(s,S))< 2$. It follows from Theorem \ref{thm:npi} that $\omega(lk(s,S))=0$, which implies that the subgraph $f(lk(s,S))$ is contained in $lk_0(f(v,X))$, a forest. It follows that $f(lk(s,S))$ is a tree and therefore contains at least two vertices of valency 1, say $e_1^{\epsilon_1}$ and $e_2^{\epsilon_2}$, where the epsilons are in $\{ +, -\}$. Suppose $e_1=e_2=e$. We may assume w.l.o.g. that $e_1^{\epsilon_1}=e^+$. Then $e_2^{\epsilon_2}=e^-$, because $e_1^{\epsilon_1}\ne e_2^{\epsilon_2}$. But this implies that $e^+$ and $e^-$ lie in a single component of $lk_0(v,X)$, a contradiction. Thus $e_1\ne e_2$ and again there are two folding edges labeled differently in $S$.
\end{proof}

\begin{thm}\label{thm:DR(2)44}
    Let $X$ be a C(4)-T(4) 2-complex with a single vertex $v$. Suppose that the attaching maps for the 2-cells are reduced closed paths that do not contain an edge sequence $ee$, for any edge $e$. Then $X$ is DR(2) and $\pi_1(X)$ is locally indicable.
\end{thm}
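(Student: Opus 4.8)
The plan is to equip $X$ with the ``square'' angle structure coming from its $C(4)$-$T(4)$ geometry, deduce that $X$ is DR, upgrade this to DR($2$) by feeding the structure into Theorem~\ref{thm:DR(2)+}, and obtain local indicability from a zero/one refinement satisfying the coloring test. For each $2$-cell $d$ fix a decomposition of its cyclically reduced boundary word into pieces $P_1\cdots P_k$; the $C(4)$ hypothesis forces $k\ge 4$. Give each corner of $d$ interior to some $P_i$ the angle $1$, and each corner between two consecutive pieces the angle $(k-2)/k\in[\tfrac12,1)$. Then all angles are $\ge 0$, and since the $k$ between-piece corners contribute $k-2$ in total and the $|\partial d|-k$ interior corners contribute $|\partial d|-k$, one gets $\sum\omega=|\partial d|-2$, so $\kappa(d)=0$ for every $2$-cell and condition~(1) of the weight test holds.

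Next one verifies the link condition: every reduced cycle $c_1\cdots c_m$ in $lk(v,X)$ has $\sum\omega(c_i)\ge 2$. Since each corner carries weight $\ge\tfrac12$, this is automatic once $m\ge 4$. A reduced cycle of length $m=1$ would be a corner joining a link vertex to itself, which is impossible because the attaching maps are reduced; a reduced cycle of length $m=3$ would yield an interior vertex of valence $3$ in a reduced diagram, contradicting $T(4)$. The case $m=2$ needs a direct argument: a reduced bigon in $lk(v,X)$ records a length-$2$ subword occurring in two distinct places of the symmetrized set of attaching words, and one has to check that, for a suitable piece decomposition, both corners involved are interior to pieces and hence carry weight $1$. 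Granting this, Gersten's weight test gives that $X$ is DR.

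To upgrade to DR($2$) I would apply Theorem~\ref{thm:DR(2)+}, so it remains to show that for every edge $e$ every path $\alpha$ in $lk(v,X)$ joining $e^+$ and $e^-$ satisfies $\omega(\alpha)\ge 2$; again only short paths are at issue. A path of length $1$ joining $e^+$ and $e^-$ is a single corner connecting these link vertices, and such a corner exists precisely when some attaching map contains a subword $ee$ or $\bar e\bar e$ --- this is exactly the configuration killed by the ``no $ee$'' hypothesis. A path of length $2$ or $3$ corresponds to a cluster of two or three $2$-cells meeting around $e^+$ and $e^-$, and must be excluded from the small cancellation data together with the weight bounds above, e.g.\ by a Greendlinger-type analysis showing that such a cluster forces either a piece structure incompatible with $C(4)$ or a forbidden valence-$3$ interior vertex incompatible with $T(4)$. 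For local indicability one passes to a zero/one refinement of the above structure and checks the three conditions of Theorem~\ref{thm:npi}, so that the coloring test holds and $\pi_1(X)$ has the non-positive immersions property and is therefore locally indicable (Wise \cite{Wise22}).

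The main obstacle is the fine analysis of short configurations in $lk(v,X)$: the reduced-bigon ($m=2$) case in the weight test, the exclusion of light length-$2$ and length-$3$ paths between $e^+$ and $e^-$ needed for Theorem~\ref{thm:DR(2)+}, and the verification that the zero/one refinement actually satisfies Theorem~\ref{thm:npi}. Each of these is where the $C(4)$-$T(4)$ and ``no $ee$'' hypotheses must be used in an essential and somewhat delicate way; the remainder is the combinatorial Gauss--Bonnet bookkeeping already carried out in the proofs of Theorems~\ref{thm:DR(2)+} and~\ref{thm:DR(2)zo}.
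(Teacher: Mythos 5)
Your route to DR(2) goes through Theorem \ref{thm:DR(2)+}, which requires that \emph{every} path in $lk(v,X)$ joining $e^+$ to $e^-$ have weight $\ge 2$; with the natural all-$1/2$ weighting this means length $\ge 4$. That hypothesis is simply not available under the assumptions of the theorem, and the ``Greendlinger-type analysis'' you hope will exclude light length-$2$ and length-$3$ paths cannot exist: the torus complex of $\langle a,b \mid aba^{-1}b^{-1}\rangle$ is C(4)-T(4), its attaching map is reduced with no $ee$, yet its link is a $4$-cycle in which $a^+$ and $a^-$ are joined by a path of two corners (weight $1$). So such configurations are fully compatible with the hypotheses, and your plan stalls exactly at the step you flag as the ``main obstacle.'' The paper avoids this by not invoking Theorem \ref{thm:DR(2)+} at all: from Gauss--Bonnet it extracts a positively curved vertex $s$ of the sphere, hence of valence $\le 3$, and then uses T(4) (no reduced link cycles of length $<4$) to force $f(lk(s,S))$ to be a \emph{single corner} with $s$ of valence $2$. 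Consequently only a length-one path (a single corner) joining $e^+$ to $e^-$ needs to be ruled out, and that is precisely what the ``no $ee$'' hypothesis excludes. In short, the hypotheses control corners, not arbitrary short link paths, and the argument must exploit the positively curved vertex directly rather than through the general DR(2) criterion.

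Two smaller points. For the weight test itself you do not need the piece-based angle assignment with its unverified bigon case: giving every corner weight $1/2$ already works, since C(4) gives $|\partial d|\ge 4$ (so $\kappa(d)\le 0$) and T(4) gives no reduced link cycles of length $<4$ (so every reduced cycle has weight $\ge 2$); this is the one-line observation the paper makes. For local indicability, your proposal to build a zero/one refinement satisfying the coloring test is left entirely unverified and is not needed: the paper simply cites Wise \cite{Wise22}, Corollary 8.2, that C(4)-T(4) complexes have non-positive immersions, hence locally indicable fundamental group. As written, your argument has a genuine gap at the DR(2) step and an unproven claim at the local indicability step.
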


\begin{proof}
    A C(4)-T(4) 2-complex has the non-positive immersions property and hence its fundamental group is locally indicable. See Wise \cite{Wise22}, Corollary 8.2. We have left to show DR(2). Note that a C(4)-T(4) 2-complex satisfies the weight test where all weights are $1/2$. Let $f\colon S\to X$ be a spherical diagram. Since $\chi(S)=2>0$, the sphere $S$ must contain a vertex $s$ of positive curvature by the combinatorial Gauss-Bonnet Theorem. Thus there is a vertex $s$ in $S$ of valency $\le 3$. Since $lk(v,X)$ does not contain cycles of length less than 4, it follows that $f(lk(s,S))$ is a tree, in fact a single edge in $lk(v,X)$ and the valence of $s$ is 2. Let $e_1^{\epsilon_1}$ and $e_2^{\epsilon_2}$ be the vertices of $f(lk(s,S))$. Assume that $e_1=e_2=e$. W.l.o.g. we assume that $e_1^{\epsilon_1}=e^+$. Then $e_2^{\epsilon_2}=e^-$. It follows that $X$ contains a 2-cell that is attached via a path that contains $ee$, which we do not allow. Thus $e_1\ne e_2$ and again there are two folding edges labeled differently in $S$.
\end{proof}


\section{Main result}

A group is called {\it locally free} if all its non-trivial finitely generated subgroups are free.

\begin{lemma}\label{lem:locfree}
Suppose $X$ is a 2-complex all of whose finite subcomplexes collapse into its 1-skeleton. Then $\pi_1(X)$ is locally free.
\end{lemma}

\begin{proof} 
This can be argued in more than one way, see Remark \ref{remark:limits} below. We present an elementary proof that, with slight variations, also applies to the next lemma.

Suppose $A$ is a finitely generated subgroup of $\pi_1(X)$ minimally generated by $n$ elements.
Let $B$ be a finitely presented group on $n$ generators that maps onto $A$. We can construct a finite 2-complex $W$ with fundamental group $B$, and a map $\alpha\colon W\to X$ so that the induced map 
$$\alpha_*\colon B=\pi_1(W)\to \pi_1(X)$$ has image $A$.
Let $Z=\alpha(W)$. Since $Z$ is a finite subcomplex of $X$, it collapses into the 1-skeleton $X^{(1)}$. 
So we may assume that $Z\subseteq X^{(1)}$. Our map $\alpha$ factors through $X^{(1)}$:
$$W\stackrel{\beta}{\to} X^{(1)}\stackrel{\iota}\hookrightarrow X$$
where $\iota$ is the inclusion and $\iota\circ \beta=\alpha$. Thus we have 
$$A=\alpha_*(\pi_1(W))=\alpha_*(B)=\iota_*\circ \beta_*(B)=\iota_*(\beta_*(B)).$$
Since $B$ is generated by $n$ elements, $\beta_*(B)$ is generated by $\le n$ elements. Since $\beta_*(B)$ maps onto $A$ it is generated by $\ge n$ elements, and therefore minimally generated by $n$ elements. Since $\beta_*(B)\subseteq \pi_1(X^{(1)})$, it is a free group of rank $n$, and therefore $B$ is free of rank $n$. We have shown that the only $n$-generator group $B$ that maps onto the $n$-generator group $A$ is free of rank $n$. It follows that $A$ is free of rank $n$.
\end{proof}

\begin{lemma}\label{lem:collapse} Let $L$ be a subcomplex of a 2-complex $X$. Assume that $\pi_1(L)$ is locally indicable and that every finite subcomplex of $X$ collapses into $L$. Then $\pi_1(X)$ is locally indicable.
\end{lemma}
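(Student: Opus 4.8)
The plan is to run the proof of Lemma~\ref{lem:locfree} essentially verbatim, making two substitutions: the free group $\pi_1(X^{(1)})$ is replaced by the locally indicable group $\pi_1(L)$, and the Hopfian rigidity of $F_n$ used at the end is replaced by an argument about abelianizations. So let $A\le\pi_1(X)$ be a nontrivial finitely generated subgroup, minimally generated by $n$ elements (hence $n\ge 1$), let $B$ be a finitely presented group on $n$ generators admitting a surjection onto $A$, and construct, as in Lemma~\ref{lem:locfree}, a finite $2$-complex $W$ with $\pi_1(W)=B$ and a combinatorial map $\alpha\colon W\to X$ whose induced map $\alpha_*\colon B\to\pi_1(X)$ has image $A$; set $Z=\alpha(W)$, a finite subcomplex of $X$. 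By hypothesis $Z$ collapses into $L$, i.e.\ $Z\searrow Z'$ for some subcomplex $Z'\subseteq L$, and since a collapse is a deformation retraction, $\alpha$ factors up to homotopy as $W\xrightarrow{\beta}Z'\hookrightarrow L\hookrightarrow X$; write $\iota_L\colon L\hookrightarrow X$ for the final inclusion.

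Next comes the counting argument. Let $C=\beta_*(B)\le\pi_1(Z')$, and let $D\le\pi_1(L)$ be the image of $C$ under the map induced by $Z'\hookrightarrow L$; chasing the factorization of $\alpha$ gives $(\iota_L)_*(D)=A$. Since $B$ is $n$-generated, $D$ is generated by at most $n$ elements; since $D$ maps onto the minimally $n$-generated group $A$, it cannot be generated by fewer than $n$. Hence $D$ is minimally $n$-generated, in particular nontrivial, so as a finitely generated subgroup of the locally indicable group $\pi_1(L)$ it surjects onto $\mathbb Z$. Composing $B\twoheadrightarrow C\twoheadrightarrow D\twoheadrightarrow\mathbb Z$ shows $B$ surjects onto $\mathbb Z$. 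Thus \emph{every} finitely presented $n$-generator group that surjects onto $A$ also surjects onto $\mathbb Z$, which is the analogue of the conclusion ``every such $B$ is free of rank $n$'' in Lemma~\ref{lem:locfree}.

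The remaining step --- deducing that $A$ itself surjects onto $\mathbb Z$ --- is the one I expect to require real care, since $A$ need not be finitely presented, so one cannot simply take $B=A$; it plays the role of the step ``it follows that $A$ is free of rank $n$''. I would argue by contradiction. If $A$ does not surject onto $\mathbb Z$, then the finitely generated abelian group $A^{ab}$ has free rank $0$, hence is finite, so the kernel $K$ of $\mathbb Z^n=F_n^{ab}\to A^{ab}$ has finite index in $\mathbb Z^n$ and is therefore free abelian of rank $n$. Choosing a basis of $K$ and lifting it to elements $w_1,\dots,w_n\in\ker(F_n\to A)$ --- possible because a surjection $F_n\twoheadrightarrow A$ carries $[F_n,F_n]$ onto $[A,A]$ --- the finitely presented $n$-generator group $B=F_n/\langle\langle w_1,\dots,w_n\rangle\rangle$ still surjects onto $A$ but has $B^{ab}=\mathbb Z^n/K$ finite, so it does \emph{not} surject onto $\mathbb Z$, contradicting the previous paragraph. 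Hence $A$ surjects onto $\mathbb Z$, and since $A$ was arbitrary, $\pi_1(X)$ is locally indicable. One further point to watch is a basepoint subtlety: the deformation retraction of $Z$ onto $Z'$ may move the basepoint, so the homotopy factoring $\alpha$ through $L$ holds only up to a change-of-basepoint isomorphism; this replaces $A$ by a conjugate subgroup, which is harmless since both surjecting onto $\mathbb Z$ and being minimally $n$-generated are conjugation-invariant.
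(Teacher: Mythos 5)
Your proof is correct and follows essentially the same route as the paper: factor a finitely presented group mapping onto $A$ through $L$ via the collapse, and apply local indicability of $\pi_1(L)$ to the image. The only difference is organizational: the paper starts from the assumption that $H_1(A)$ is finite and simply asserts the existence of a finitely presented $B\twoheadrightarrow A$ with $H_1(B)$ finite (so that the image in $\pi_1(L)$, being finitely generated with finite $H_1$, must be trivial), whereas you argue contrapositively and explicitly construct such a $B$ from a basis of the kernel of $\mathbb Z^n\to A^{\mathrm{ab}}$, thereby supplying a proof of the fact the paper leaves unproved.
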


\begin{proof}
Suppose $A$ is a finitely generated subgroup of $\pi_1(X)$ so that $H_1(A)$ is finite. Then there exists a finitely presented group $B$ that maps onto $A$ so that $H_1(B)$ is finite.
We can construct a finite 2-complex $W$ with fundamental group $B$ and a map $\alpha\colon W\to X$ so that the induced map 
$$\alpha_*\colon B=\pi_1(W)\to \pi_1(X)$$ has image $A$.
Let $Z=\alpha(W)$. Since $Z$ is a finite subcomplex of $X$, it collapses into $L$. 
So we may assume that $Z\subseteq L$. Our map $\alpha$ factors through $L$:
$$W\stackrel{\beta}{\to} L\stackrel{\iota}\hookrightarrow X$$
where $\iota$ is the inclusion and $\iota\circ \beta=\alpha$. Thus we have 
$$A=\alpha_*(\pi_1(W))=\alpha_*(B)=\iota_*\circ \beta_*(B)=\iota_*(\beta_*(B)).$$
Since $B$ is finitely generated, so is $\beta_*(B)$. Since $H_1(B)$ is finite, so is $H_1(\beta_*(B))$. 
Since $\beta_*(B)$ is a finitely generated subgroup of the locally indicable group $\pi_1(L)$ it follows that $\beta_*(B)=1$. Thus $A=\iota_*(\beta_*(B))=1$.
\end{proof}

\begin{remark}\label{remark:limits}
Lemma \ref{lem:locfree} also follow from direct limits. $X$ is a union of finite subcomplexes $X_i$ forming a directed set with respect to inclusion. We have
$$\pi_1(X)=\pi_1(\lim_{\to}X_i)=\lim_{\to}\pi_1(X_i).$$
Since each $X_i$ collapses into the 1-skeleton, each $\pi_1(X_i)$ is free and so $\pi_1(X)$ is a direct limit of free groups and therefore is locally free. In fact, a direct limit of locally free groups is locally free. See for example Conner et al. \cite{Conner}, Lemma 24.

It is not clear how to make the limit argument work for Lemma \ref{lem:collapse}, because despite the fact that every $X_i$ collapses into $L$ we can not conclude in general that $\pi_1(X_i)$ is locally indicable. Under the assumption that $L$ is DR in addition to $\pi_1(L)$ being locally indicable, the limit argument does work. If $L_i$ is the subcomplex of $L$ into which $X_i$ collapses, then $\pi_1(L_i)$ is locally indicable by Theorem \ref{thm:easycase} below. It follows that $\pi_1(X_i)$ is locally indicable and so $\pi_1(X)$ is a direct limit of locally indicable groups and therefore is locally indicable.
 \end{remark}

Here is a baby version of our main result Theorem \ref{thm:good} stated at the end of this section.

\begin{thm}\label{thm:easycase} Let $X$ be a subcomplex of the 2-complex $Y$. If $Y$ is DR then the kernel of the inclusion induced map $\pi_1(X)\to \pi_1(Y)$ is locally free. Consequently, if $\pi_1(Y)$ is locally indicable, then so is $\pi_1(X)$.
\end{thm}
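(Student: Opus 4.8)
The plan is to realize the kernel $K$ of $\pi_1(X)\to\pi_1(Y)$ as the fundamental group of a subcomplex of the universal cover $\tilde Y$, and then to invoke the Corson--Trace Theorem~\ref{thm:CT} together with Lemma~\ref{lem:locfree}. First I would let $p\colon\tilde Y\to Y$ be the universal covering and set $\hat X$ to be a connected component of the subcomplex $p^{-1}(X)\subseteq\tilde Y$. Restricting $p$ makes $\hat X$ a covering space of $X$; a loop in $X$ based at a point of $\hat X$ lifts to a loop in $\hat X$ precisely when its lift to $\tilde Y$ closes up, i.e.\ precisely when it is null-homotopic in $Y$. Hence $p_*\pi_1(\hat X)=K$, and since $p$ restricts to a covering, $\pi_1(\hat X)\cong K$.

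Next I would observe the key point. Every finite subcomplex $Z$ of $\hat X$ is a finite subcomplex of $\tilde Y$, so by Theorem~\ref{thm:CT} (here we use that $Y$ is DR) $Z$ collapses into $\tilde Y^{(1)}$. A collapse only removes cells of the complex being collapsed, so the terminal complex of such a collapse is a subcomplex of $Z$, hence is contained in $Z\cap\tilde Y^{(1)}\subseteq\hat X\cap\tilde Y^{(1)}=\hat X^{(1)}$. Thus every finite subcomplex of $\hat X$ collapses into $\hat X^{(1)}$, and Lemma~\ref{lem:locfree} applies to give that $\pi_1(\hat X)=K$ is locally free. This proves the first assertion.

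For the consequence, write $Q$ for the image of $\pi_1(X)\to\pi_1(Y)$, so that $1\to K\to\pi_1(X)\to Q\to 1$ is exact. If $\pi_1(Y)$ is locally indicable then so is its subgroup $Q$; and the locally free group $K$ is locally indicable, since every nontrivial finitely generated free group surjects onto $\mathbb Z$. It then remains to note that an extension of a locally indicable group by a locally indicable group is locally indicable: given a nontrivial finitely generated $H\le\pi_1(X)$, either its image in $Q$ is nontrivial, in which case that image is finitely generated and surjects onto $\mathbb Z$ and we precompose with $H\twoheadrightarrow\mathrm{im}$, or its image in $Q$ is trivial, in which case $H\le K$ is a nontrivial finitely generated subgroup of the locally indicable group $K$ and surjects onto $\mathbb Z$. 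Hence $\pi_1(X)$ is locally indicable.

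The one thing to be careful about --- more a point of precision than a real obstacle --- is that the Corson--Trace collapse of a finite subcomplex $Z\subseteq\hat X$, being internal to $Z$, never leaves $\hat X$, so it genuinely witnesses a collapse of $Z$ into $\hat X^{(1)}$ and not merely into $\tilde Y^{(1)}$; this is exactly what makes Lemma~\ref{lem:locfree} applicable to $\hat X$. The identification $\pi_1(\hat X)\cong K$ and the elementary extension argument for local indicability are routine.
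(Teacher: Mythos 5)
Your proof is correct and follows essentially the same route as the paper: realize the kernel as $\pi_1$ of the cover $\hat X$ of $X$ sitting inside $\tilde Y$ as a component of $p^{-1}(X)$, apply Theorem~\ref{thm:CT} plus Lemma~\ref{lem:locfree} to get local freeness, and finish with the standard extension argument for local indicability. Your remark that the collapse is internal to a finite subcomplex $Z\subseteq\hat X$, so it lands in $\hat X^{(1)}$ and not merely $\tilde Y^{(1)}$, is a welcome precision that the paper's own wording glosses over.
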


\begin{proof} This is an immediate consequence of Theorem \ref{thm:CT}. Let $\bar X$ be the covering complex whose fundamental group is the kernel of the inclusion induced map $\pi_1(X)\to \pi_1(Y)$. Then, by standard covering space theory (see Hatcher \cite{hatcher}), we have $\bar X\subseteq \tilde Y$, where $\tilde Y$ is the universal covering of $Y$. Since every finite subcomplex of $\tilde Y$ collapses into the 1-skeleton by Theorem \ref{thm:CT}, every finite subcomplex of $\bar X$ has free fundamental group. It follows from Lemma \ref{lem:locfree} that $\pi_1(\bar X)$ is locally free and hence locally indicable. Thus the kernel of the inclusion induced map $\pi_1(X)\to \pi_1(Y)$ is locally indicable, and if we assume that $\pi_1(Y)$ is as well, we can conclude that $\pi_1(X)$ is locally indicable.
\end{proof}

\begin{defn} Let $f\colon X\to Y$ be a cellular map between 2-complexes. Suppose $K$ is a full subcomplex of $Y$ and $f^{-1}(K)=L$. Then $f$ is called an {\em immersion outside of $L$} if
\begin{enumerate}
\item The interior of a cell in $X$ not in $L$ is mapped homeomorphically to the interior of a cell in $Y$;
\item If $x$ is a point in $X$ not in $L$, then $f\colon lk(x,X)\to lk(f(x),Y)$ is an embedding.
\end{enumerate}
\end{defn}

\begin{lemma}\label{lem:imoutside} Suppose we have a map $f\colon X\to Y$ between 2-complexes, $K\subseteq Y$ and $L=f^{-1}(K)$. Suppose further that $f$ is an immersion outside of $L$. Assume that $Y$ is DR away from $K$. Let $p_X\colon \bar X\to X $ be the covering associated with the kernel of the induced map $f_*\colon \pi_1(X)\to \pi_1(Y)$. Then every finite subcomplex of $\bar X$ collapses into $p_X^{-1}(L)\cup \bar X^{(1)}$.
\end{lemma}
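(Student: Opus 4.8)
The plan is to lift the whole situation to the universal cover $\tilde Y$ of $Y$ and then reproduce, inside $\bar X$, the relative Corson--Trace collapse furnished by Theorem~\ref{thm:relCT}. Since $\pi_1(\bar X)=\ker f_*$, the composite map $\bar X\to X\to Y$ (the covering projection $p_X$ followed by $f$) induces the trivial homomorphism on $\pi_1$ and hence lifts to a cellular map $\bar f\colon\bar X\to\tilde Y$ with $p_Y\circ\bar f=f\circ p_X$, where $p_Y\colon\tilde Y\to Y$ is the universal covering. From $p_Y\circ\bar f=f\circ p_X$ one reads off immediately that $\bar f^{-1}(p_Y^{-1}(K))=p_X^{-1}(f^{-1}(K))=p_X^{-1}(L)$; in particular a cell of $\bar X$ lies in $p_X^{-1}(L)$ if and only if $\bar f$ maps it into $p_Y^{-1}(K)$. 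Moreover, since $p_X$ and $p_Y$ are combinatorial isomorphisms on cells and on links while $f$ is an immersion outside $L$, the lift $\bar f$ is an immersion outside $p_X^{-1}(L)$, a property inherited by every subcomplex of $\bar X$ because passing to a subcomplex only shrinks links.

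The technical heart of the argument is a \emph{free-face lifting} statement, which I would isolate as follows. Let $C$ be a finite subcomplex of $\bar X$, put $D=\bar f(C)\subseteq\tilde Y$, and suppose $d$ is a $2$-cell of $D$ with $d\not\subseteq p_Y^{-1}(K)$ and $e$ is a free face of $d$ in $D$ with $e\not\subseteq p_Y^{-1}(K)$. The $2$-cells $\tilde d_1,\dots,\tilde d_r$ of $C$ that map onto $d$ all lie outside $p_X^{-1}(L)$, so $\bar f$ restricts on each to a combinatorial isomorphism $\partial\tilde d_i\to\partial d$; as $e$ occurs exactly once in $\partial d$, each $\tilde d_i$ contains exactly one edge $\tilde e_i$ with $\bar f(\tilde e_i)=e$, and $\tilde e_i$ lies outside $p_X^{-1}(L)$. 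Examining the link of an interior point $\bar x$ of $\tilde e_i$ (a point not in $p_X^{-1}(L)$), $\bar f$ embeds $lk(\bar x,C)$ into $lk(\bar f(\bar x),D)$, and the latter — because $e$ is a free face of $d$ — consists of two vertices joined by a single edge. Hence $lk(\bar x,C)$ has at most one edge; since it has at least one (from $\tilde d_i$), it has exactly one, i.e.\ $\tilde e_i$ lies in exactly one $2$-cell of $C$ and occurs there once, so $\tilde e_i$ is a free face of $\tilde d_i$ in $C$. The same link argument forces the $\tilde e_i$ to be pairwise distinct, so the elementary collapses $(\tilde e_1,\tilde d_1),\dots,(\tilde e_r,\tilde d_r)$ can be carried out in turn, yielding a collapse $C\searrow C_1$ with $C_1=C\setminus\{\tilde e_1,\tilde d_1,\dots,\tilde e_r,\tilde d_r\}$.

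With this tool I would prove the lemma by induction on $N=\#\{\,2\text{-cells of }\bar f(C)\text{ not contained in }p_Y^{-1}(K)\,\}$, for $C$ a finite subcomplex of $\bar X$. If $N=0$ then every $2$-cell of $C$ maps into $p_Y^{-1}(K)$, hence lies in $p_X^{-1}(L)$, so $C\subseteq p_X^{-1}(L)\cup\bar X^{(1)}$ already. If $N\ge 1$, apply Theorem~\ref{thm:relCT} to the finite subcomplex $D=\bar f(C)$ of $\tilde Y$: it collapses into $p_Y^{-1}(K)\cup\tilde Y^{(1)}$ with $p_Y^{-1}(K)$ fixed. Since this target contains the entire $1$-skeleton, I may assume the collapse uses only $1$-cell/$2$-cell elementary collapses — the $0$-cell/$1$-cell ones may simply be deleted, since restoring $1$- and $0$-cells keeps the remaining collapses valid and does not move the endpoint out of $p_Y^{-1}(K)\cup\tilde Y^{(1)}$. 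As $N\ge 1$ the collapse is nonempty, and its first step removes a pair $(e,d)$ with $d,e\not\subseteq p_Y^{-1}(K)$ (that set being fixed) and $e$ a free face of $d$ in $D$. Free-face lifting then produces $C\searrow C_1$ as above; the $2$-cells of $C_1$ are exactly those of $C$ other than $\tilde d_1,\dots,\tilde d_r$, so $d$ is not a $2$-cell of $\bar f(C_1)$ while every other $2$-cell of $\bar f(C_1)$ is a $2$-cell of $D$, whence $\bar f(C_1)$ has at most $N-1$ $2$-cells outside $p_Y^{-1}(K)$. By the induction hypothesis $C_1$ collapses into $p_X^{-1}(L)\cup\bar X^{(1)}$, and concatenating with $C\searrow C_1$ finishes the induction. (Every cell removed in these collapses maps outside $p_Y^{-1}(K)$, so in fact $p_X^{-1}(L)$ is fixed throughout.)

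The step I expect to be the genuine obstacle is free-face lifting — specifically, proving that the unique edge of $\tilde d_i$ over $e$ is free \emph{in} $C$, not merely that its image is free in $D$; this is exactly the point at which the hypothesis ``immersion outside $L$'' is used in an essential way rather than ordinary covering-space theory. A secondary nuisance is that $\bar f(C_1)$ need not equal $D\setminus\{e,d\}$, because edges of $C$ over $e$ lying in no $2$-cell of $C$ may survive the collapse; this is why it is cleaner to re-invoke Theorem~\ref{thm:relCT} directly for the smaller complex $C_1$ and to induct on the number of ``bad'' $2$-cells in the image, rather than trying to transport the collapse of $D$ to $C$ all at once.
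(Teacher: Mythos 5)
Your proof is correct and follows essentially the same route as the paper's: lift $f$ to $\bar f\colon\bar X\to\tilde Y$, apply Theorem~\ref{thm:relCT} to the image of a finite subcomplex, and use the immersion-outside-$L$ hypothesis, via the embedding of the link at an interior point of an edge lying over the free edge, to conclude that this edge is free upstairs. The only difference is organizational: you run an explicit induction on the number of $2$-cells of the image outside $p_Y^{-1}(K)$ and collapse all preimage pairs at once (also handling the possible $0$-cell/$1$-cell collapses in the relative Corson--Trace sequence), whereas the paper simply exhibits one free edge of $Z$ not mapping into $p_Y^{-1}(K)$ and iterates.
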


\begin{proof} Let $p_Y\colon \tilde Y\to Y$ be the universal covering of $Y$. We have a commutative diagram
\[\begin{tikzcd}
\bar X \arrow{r}{\bar f} \arrow[swap]{d}{p_X} & \tilde Y \arrow{d}{p_Y} \\
X \arrow{r}{f} & Y
\end{tikzcd}
\]
where $\bar f$ is the lift of $f$. Since $f$ is an immersion outside of $L$ we have that $\bar f$ is an immersion outside of $p_X^{-1}(L)$. Let $Z$ be a finite subcomplex of $\bar X$ that is not already contained in $p_X^{-1}(L)\cup \bar X^{(1)}$. Then $Z'=\bar f(Z)$ is a finite subcomplex of $\tilde Y$ not contained in $p_Y^{-1}(K)\cup \tilde Y^{(1)}$. Since $Y$ is DR away from $K$, $Z'$ contains a free edge $\tilde e$ not in $p_Y^{-1}(K)$ by Theorem \ref{thm:relCT}. Let $\tilde d$ be the 2-cell that can be collapsed  by pushing in $\tilde e$. 
Let $\bar e$ be an edge in $Z$ so that $\bar f(\bar e)=\tilde e$. Let $\bar d_1,\ldots, \bar d_k$ be the 2-cells in $Z$ that contain $\bar e$ in their boundary. Let $m_{\bar e}$ be the midpoint of $\bar e$ and $m_{\tilde e}$ be the midpoint of $\tilde e$. Since $\bar f$ is an immersion outside of $p_X^{-1}(L)$ the map $\bar f\colon lk(m_{\bar e}, \bar X)\to lk(m_{\tilde e}, \tilde Y)$ is an embedding. Therefore $f\colon lk(m_{\bar e}, Z)\to lk(m_{\tilde e}, Z')$ is an embedding as well. Since $lk(m_{\tilde e}, \tilde Z')$ is a single half circle (because $\tilde e$ is a free edge of $\tilde d$), $lk(m_{\bar e}, Z)$ can not contain more than one half circle. Thus $k=1$ and $\bar d_1$ is the only 2-cell of $Z$ that contains the edge $\bar e$ in its boundary, and it does so exactly once. It follows that $\bar e$ is a free edge in $Z$.
\end{proof}

\begin{lemma}\label{lem:locind} Suppose $X$ is a 2-complex, $L$ is a subcomplex of $X$, and $p_X\colon \bar X\to X$ is a covering. If each connected component of $L$ has locally indicable fundamental group, then so does $p^{-1}_X(L)\cup\bar X^{(1)}$.
\end{lemma}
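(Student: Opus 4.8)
\emph{Proof plan.} Write $W:=p_X^{-1}(L)\cup\bar X^{(1)}$. The plan is to identify $\pi_1(W)$ as a free product of locally indicable groups together with a free group, and then to invoke the fact that such a free product is again locally indicable. We may assume $\bar X$ is connected, so that $\bar X^{(1)}$, and hence $W$, is connected; the general case follows by treating each component of $\bar X$ separately.

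First I would record two routine facts. Since $p_X$ is a covering and $L$ is a subcomplex, the restriction $p_X\colon p_X^{-1}(L)\to L$ is again a covering; hence each connected component $L_\alpha$ of $p_X^{-1}(L)$ is a covering of some component $L_j$ of $L$, and the induced map $\pi_1(L_\alpha)\to\pi_1(L_j)$ is injective. Moreover, a subgroup of a locally indicable group is locally indicable, because a nontrivial finitely generated subgroup of the former is also a nontrivial finitely generated subgroup of the latter, hence surjects onto $\mathbb Z$. Combining these, every $\pi_1(L_\alpha)$ is locally indicable.

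Next I compute $\pi_1(W)$. Note that $W=\bar X^{(1)}\cup\bigcup_\alpha L_\alpha$, that the $L_\alpha$ are pairwise disjoint, and that $L_\alpha\cap\bar X^{(1)}=L_\alpha^{(1)}$. Choose a spanning tree $T_\alpha$ of each graph $L_\alpha^{(1)}$; then $\bigcup_\alpha T_\alpha$ is a forest in $\bar X^{(1)}$, which I extend to a spanning tree $T$ of $\bar X^{(1)}$, and by maximality of $T_\alpha$ one checks $T\cap L_\alpha^{(1)}=T_\alpha$ for every $\alpha$. Collapsing the contractible subcomplex $T$ is a homotopy equivalence, and $W/T$ is visibly the wedge, at the image point of $T$, of the complexes $L_\alpha/T_\alpha$ together with one circle for each edge of $\bar X^{(1)}$ lying in neither $T$ nor any $L_\alpha$. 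Therefore
$$\pi_1(W)\;\cong\;\Big(\ast_\alpha\pi_1(L_\alpha)\Big)\ast F$$
with $F$ free; writing $F$ as a free product of copies of $\mathbb Z$ exhibits $\pi_1(W)$ as a (possibly infinite) free product of locally indicable groups.

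It then remains to invoke the well-known fact that any free product of locally indicable groups is locally indicable, which is exactly where the real content sits, the other steps being bookkeeping (the only mild subtlety being the identity $T\cap L_\alpha^{(1)}=T_\alpha$). For completeness this fact can be proved via the Kurosh subgroup theorem: a nontrivial finitely generated subgroup $H$ decomposes as $H\cong F'\ast H_1\ast\cdots\ast H_m$ with $F'$ free of finite rank and each $H_k$ a finitely generated subgroup of a conjugate of one of the free factors, hence locally indicable; if $F'\ne 1$ then $H$ surjects onto $\mathbb Z$, while if $F'=1$ then $H^{ab}=\bigoplus_k H_k^{ab}$ admits a $\mathbb Z$ quotient because some $H_k\ne 1$ (as $H\ne 1$) is finitely generated, nontrivial and locally indicable. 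Applying this to the displayed decomposition of $\pi_1(W)$ completes the argument.
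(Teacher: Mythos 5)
Your argument is correct and follows essentially the same route as the paper: identify each component of $p_X^{-1}(L)$ as a cover of a component of $L$ (so its fundamental group is locally indicable), decompose $\pi_1(p_X^{-1}(L)\cup\bar X^{(1)})$ as a free product of these groups with a free group, and use that free products of locally indicable groups are locally indicable. The paper simply asserts the decomposition and the free-product fact, whereas you supply the spanning-tree and Kurosh details, so there is nothing to correct.
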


\begin{proof} 
Each component $\bar L_c$ of $p_X^{-1}(L)$ is a covering of some component $L_c$ of $L$. Since ${p_X}_*\colon \pi_1(\bar L_c)\to \pi_1(L_c)$ is injective and we assumed that $\pi_1(L_c)$ is locally indicable, it follows that $\pi_1(\bar L_c)$ is locally indicable. Thus $\pi_1(p_X^{-1}(L)\cup\bar X^{(1)})$, being a free product of locally indicable groups and a free group, is locally indicable. 
\end{proof}

Here is our main result, a relative version of Theorem \ref{thm:easycase}.

\begin{thm}\label{thm:good} Suppose we have a map $f\colon X\to Y$ between 2-complexes, $K\subseteq Y$ and $L=f^{-1}(K)$. Suppose further that $f$ is an immersion outside of $L$. Assume that $Y$ is DR away from $K$. Then:
\begin{enumerate}
    \item if $L$ is DR then so is $X$;
    \item if every component of $L$ has locally indicable fundamental group and $\pi_1(Y)$ is locally indicable, then so is $\pi_1(X)$.
\end{enumerate}
\end{thm}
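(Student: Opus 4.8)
I would prove the two parts separately. Part (2) is a short assembly of the lemmas already established; part (1) needs a little more care and is where the real work is.

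\textbf{Part (2).} First pass to the covering $p_X\colon\bar X\to X$ associated with $\ker f_*$, where $f_*\colon\pi_1(X)\to\pi_1(Y)$, so that the $p_X$-induced map identifies $\pi_1(\bar X)$ with $\ker f_*$. Lemma \ref{lem:imoutside} says every finite subcomplex of $\bar X$ collapses into $M:=p_X^{-1}(L)\cup\bar X^{(1)}$, and Lemma \ref{lem:locind} says $\pi_1(M)$ is locally indicable (this is where ``every component of $L$ has locally indicable fundamental group'' is used). Feeding these two facts into Lemma \ref{lem:collapse}, with $\bar X$ playing the role of ``$X$'' and $M$ the role of ``$L$'', gives that $\pi_1(\bar X)=\ker f_*$ is locally indicable. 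On the other hand $\mathrm{im}(f_*)$ is a subgroup of $\pi_1(Y)$, hence locally indicable. Now use that locally indicable groups are closed under extensions: if $1\to N\to G\to Q\to 1$ with $N,Q$ locally indicable and $1\neq H\leq G$ is finitely generated, then either the image of $H$ in $Q$ is nontrivial, and composing $H\twoheadrightarrow\mathrm{im}(H)$ with a surjection $\mathrm{im}(H)\twoheadrightarrow\mathbb Z$ gives $H\twoheadrightarrow\mathbb Z$, or $H\leq N$ and $H\twoheadrightarrow\mathbb Z$ because $N$ is locally indicable. Applying this to $1\to\ker f_*\to\pi_1(X)\to\mathrm{im}(f_*)\to 1$ gives that $\pi_1(X)$ is locally indicable.

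\textbf{Part (1).} The plan is to verify that every finite subcomplex of the universal cover $\tilde X$ collapses into $\tilde X^{(1)}$ and then invoke the Corson--Trace characterization of diagrammatic reducibility. First I would record a universal-cover version of Lemma \ref{lem:imoutside}: every finite subcomplex of $\tilde X$ collapses into $p^{-1}(L)\cup\tilde X^{(1)}$, where $p\colon\tilde X\to X$. This is proved exactly like Lemma \ref{lem:imoutside}, using the lift $\widehat f\colon\tilde X\to\tilde Y$ of $f$ along the universal coverings (it exists since $\tilde X$ is simply connected, it is again an immersion outside $p^{-1}(L)$, and $\widehat f^{-1}(p_Y^{-1}(K))=p^{-1}(L)$) together with Theorem \ref{thm:relCT}. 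Now let $C\subseteq\tilde X$ be finite; after this collapse it has become a finite subcomplex $C'$, all of whose $2$-cells lie in $p^{-1}(L)$ (these being the only $2$-cells available in $p^{-1}(L)\cup\tilde X^{(1)}$). Since $L$ is DR, so is every component of $p^{-1}(L)$, because diagrammatic reducibility is inherited by coverings (folding edges downstairs lift to folding edges upstairs, as the coverings induce isomorphisms on links). Hence by Theorem \ref{thm:CT} the subcomplex of $p^{-1}(L)$ consisting of the $2$-cells of $C'$ and their boundaries collapses into its $1$-skeleton; since $C'$ has exactly those $2$-cells, each push used in this collapse is a valid push in $C'$, so $C'$ collapses into $\tilde X^{(1)}$. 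Thus every finite subcomplex of $\tilde X$ collapses into $\tilde X^{(1)}$, and the Corson--Trace characterization yields that $X$ is DR.

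\textbf{Main obstacle.} The delicate points are all in the second half of part (1): adapting Lemma \ref{lem:imoutside} to the universal cover, checking that diagrammatic reducibility passes to coverings (so Theorem \ref{thm:CT} applies to the components of $p^{-1}(L)$), and chaining the two collapses --- one has to observe that adjoining the stray $1$- and $0$-cells of $\tilde X^{(1)}$ to a finite subcomplex of $p^{-1}(L)$ creates no new incidences with $2$-cells, so the second collapse still goes through. A further point, implicit in the appeal to Corson--Trace, is that we need the \emph{converse} of Theorem \ref{thm:CT} (all finite subcomplexes of $\tilde X$ collapsing into $\tilde X^{(1)}$ implies $X$ is DR), i.e. the full characterization alluded to in the introduction.
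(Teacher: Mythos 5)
Your part~(2) is correct and is essentially the paper's own argument: Lemmas \ref{lem:imoutside}, \ref{lem:locind} and \ref{lem:collapse} give that $\ker f_*=\pi_1(\bar X)$ is locally indicable, and the extension argument you spell out is exactly what the paper leaves implicit in the line $\ker f_*=\pi_1(\bar X)\hookrightarrow\pi_1(X)\to\pi_1(Y)$.

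The gap is in part~(1), at the second collapse. Theorem \ref{thm:CT} is about the \emph{universal} cover of a DR complex; it does not say that finite subcomplexes of a DR complex, or of an arbitrary covering of one, collapse into the $1$-skeleton (the standard torus complex satisfies the weight test, hence is DR, yet has no free edge at all). The components of $p^{-1}(L)\subseteq\tilde X$ are the coverings of the components $L_c$ of $L$ corresponding to $\ker\bigl(\pi_1(L_c)\to\pi_1(X)\bigr)$, and nothing in the hypotheses forces these kernels to vanish: injectivity of $\pi_1(L)\to\pi_1(X)$ is not assumed, and in the intended LOT application (Theorem \ref{thm:LOT}) it is not known beforehand. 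So the step ``by Theorem \ref{thm:CT} the $2$-cells of $C'$ in $p^{-1}(L)$ can be collapsed away'' is unjustified; and note that your intermediate target, that every finite subcomplex of $\tilde X$ collapses into $\tilde X^{(1)}$, is via the full Corson--Trace characterization equivalent to the very statement being proved, so it cannot be reached by this shortcut.

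The repair is to stop after the first collapse and argue with spherical diagrams instead of attempting a second collapse; this is what the paper does, working with the cover $\bar X$ associated with $\ker f_*$ for which Lemma \ref{lem:imoutside} is stated. Since DR passes up to coverings, $p_X^{-1}(L)$, and hence $M=p_X^{-1}(L)\cup\bar X^{(1)}$, is DR. A reduced spherical diagram over $\bar X$ has finite image $Z$, which by Lemma \ref{lem:imoutside} collapses into $M$: either $Z\subseteq M$, so the diagram is a reduced diagram over the DR complex $M$, a contradiction, or $Z$ has a free edge, and any edge of the sphere mapping to a free edge of the image is a folding edge, again a contradiction. Hence $\bar X$ is DR, and since spherical diagrams over $X$ lift to $\bar X$ with reducedness preserved, $X$ is DR.
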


\begin{proof} (1) Let $p_X\colon \bar X\to X $ be the covering associated with the kernel of the induced map $f_*\colon \pi_1(X)\to \pi_1(Y)$. By Lemma \ref{lem:imoutside} every finite subcomplex of $\bar X$ collapses into $p^{-1}_X(L)\cup\bar X^{(1)}$. If $L$ is DR then $p_X^{-1}(L)$ is DR, because it is a covering space, and therefore $p^{-1}_X(L)\cup\bar X^{(1)}$ is DR. It follows that $\bar X$ is DR, and therefore $X$ is DR. 

\noindent (2) If every component of $L$ has locally indicable fundamental group, then by Lemma \ref{lem:locind} we have that $\pi_1(p^{-1}_X(L)\cup\bar X^{(1)})$ is locally indicable. Since every finite subcomplex of $\bar X$ collapses into $p^{-1}_X(L)\cup\bar X^{(1)}$, it follows from Lemma \ref{lem:collapse} that $\pi_1(\bar X)$ is locally indicable. We have 
$$\ker f_*=\pi_1(\bar X)\hookrightarrow \pi_1(X)\stackrel{f_*}{\rightarrow} \pi_1(Y).$$
Since we assume that $\pi_1(Y)$ is locally indicable, so is $\pi_1(X)$.
\end{proof}


\section{An application to labeled oriented trees}

A labeled oriented graph (LOG) $\Gamma = (E, V, s, t, \lambda)$ consists of two sets $E$, $V$ of edges and vertices, and three maps $s, t, \lambda\colon E\to V$ called, respectively source, target and label. $\Gamma$ is said to be a labeled oriented tree (LOT) when the underlying graph is a tree. The associated LOG presentation is defined as
$$P(\Gamma)=\langle V\ |\  s(e)\lambda(e)=\lambda(e)t(e),\ e\in E \rangle.$$
The LOG complex $K(\Gamma)$ is the standard 2-complex defined by the presentation, and the group $G(\Gamma)$ presented by $P(\Gamma )$ is equal to $\pi_1(K(\Gamma))$. 

It is known that LOT-complexes are spines of ribbon 2-disc complements. See Howie \cite{How83}. So the study of LOTs is an extension of classical knot theory. Asphericity, known for classical knots, is unresolved for LOTs. The asphericity question for LOTs is of central importance to Whitehead's asphericity conjecture: A subcomplex of an aspherical 2-complex is aspherical. See Berrick/Hillman \cite{BerrickHillman}, Bogley \cite{Bogley}, and Rosebrock \cite{Ro18}.

A {\em sub-LOG} $\Gamma_0=(E_0, V_0)\subseteq\Gamma$ is a subgraph so that $E_0\ne\emptyset$ and $\lambda\colon E_0\to V_0$. A {\em sub-LOT} is a sub-LOG which is a LOT itself. A LOG is called {\em boundary reduced} if whenever $v$ is a vertex of valency 1 then $v=\lambda(e)$ for some edge $e$. It is called {\em interior reduced} if for every vertex $v$ no two edges starting or terminating at $v$ carry the same label. It is called {\em compressed }if for every edge $e$ the label $\lambda(e)$ is not equal to $s(e)$ or $t(e)$. Finally, a LOG is {\em reduced} if it is boundary reduced, interior reduced, and compressed. Given a LOG, reductions can be performed to produce a reduced LOG, and, in case the LOG is a LOT, this process does not affect the homotopy type of the LOT complex. A LOG is called {\em injective} if the labeling map $\lambda\colon E\to V$ is injective. \\

\begin{thm}\label{thm:HuckRose} If $\Gamma$ is a reduced injective LOT that does not contain boundary reducible sub-LOTs, then $K(\Gamma)$ admits a zero/one-angle structure that satisfies the coloring test. It follows that $K(\Gamma)$ is DR(2) and $G(\Gamma)$ is locally indicable.
\end{thm}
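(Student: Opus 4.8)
Since $K(\Gamma)$ is the standard complex of $P(\Gamma)$ it has a single vertex $v$, so the natural tool is Theorem \ref{thm:DR(2)zo}: it is enough to exhibit a zero/one-angle structure on $K(\Gamma)$ that satisfies the coloring test and for which $x^{+}$ and $x^{-}$ lie in different components of $lk_{0}(v,K(\Gamma))$ for every generator (i.e.\ vertex of $\Gamma$) $x$. I would use the following structure. Each $2$-cell of $K(\Gamma)$ is a square attached along $r_{e}=s(e)\lambda(e)t(e)^{-1}\lambda(e)^{-1}$; since $\Gamma$ is compressed and has no loops, $a=s(e)$, $b=\lambda(e)$, $c=t(e)$ are three distinct generators, and the four corners of this square join the link vertices $\{a^{-},b^{-}\}$, $\{a^{+},b^{-}\}$, $\{b^{+},c^{+}\}$, $\{b^{+},c^{-}\}$. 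Assign angle $0$ to the two corners joining link vertices of equal sign, namely $\{a^{-},b^{-}\}$ and $\{b^{+},c^{+}\}$, and angle $1$ to the remaining two. Then every $2$-cell has corner-angle sum $2$, hence curvature $2-(4-2)=0$, so condition (1) of Theorem \ref{thm:npi} is automatic. Also $lk_{0}(v,K(\Gamma))$ has edge set $\{\,\{s(e)^{-},\lambda(e)^{-}\}:e\in E\,\}\cup\{\,\{\lambda(e)^{+},t(e)^{+}\}:e\in E\,\}$, so it splits as a disjoint union $G^{-}\sqcup G^{+}$, where $G^{-}$ sits on the negatively oriented link vertices with edges $\{s(e)^{-},\lambda(e)^{-}\}$ and $G^{+}$ sits on the positively oriented ones with edges $\{\lambda(e)^{+},t(e)^{+}\}$. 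Every angle-$1$ corner joins a positively to a negatively oriented link vertex and hence runs between $G^{+}$ and $G^{-}$; so condition (3) of Theorem \ref{thm:npi} is automatic, and likewise $x^{+}\in G^{+}$ and $x^{-}\in G^{-}$ always lie in different components of $lk_{0}(v,K(\Gamma))$.

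\textbf{The main obstacle.} After these reductions only condition (2) of Theorem \ref{thm:npi} remains: $lk_{0}(v,K(\Gamma))$ must be a forest, equivalently $G^{-}$ and $G^{+}$ must each be forests (and by the source/target symmetry of the hypotheses it suffices to treat one of them). This is the heart of the matter and is exactly where the hypotheses \emph{injective} and \emph{no boundary reducible sub-LOT} enter; it is the content of the Huck--Rosebrock theorem being quoted. Injectivity already gives a foothold: orienting each edge of $G^{-}$ as $\lambda(e)\to s(e)$, every vertex has out-degree at most $1$, so each component of $G^{-}$ is a tree or contains a unique cycle, which is then a directed cycle arising from edges $e_{1},\dots,e_{m}$ with $\lambda(e_{i})=v_{i}$ and $s(e_{i})=v_{i+1}$ (indices mod $m$). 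One has to rule this out. The delicate point --- and the reason this is not a one-line deduction --- is that the obvious candidate forbidden object, the sub-LOG spanned by $\{e_{1},\dots,e_{m}\}$, is in general disconnected (the $e_{i}$ need not be adjacent in the tree $\Gamma$), hence is not a sub-LOT; producing a genuine boundary reducible sub-LOT requires the more careful propagation argument of Huck--Rosebrock, which I would invoke rather than reprove.

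\textbf{Conclusion.} Granting that $G^{-}$ and $G^{+}$ are forests, the zero/one-angle structure above satisfies all three conditions of Theorem \ref{thm:npi}, so $K(\Gamma)$ satisfies the coloring test; and, as noted, $x^{+}$ and $x^{-}$ lie in different components of $lk_{0}(v,K(\Gamma))$ for every generator $x$. Theorem \ref{thm:DR(2)zo} then yields at once that $K(\Gamma)$ is DR($2$) and that $G(\Gamma)=\pi_{1}(K(\Gamma))$ is locally indicable.
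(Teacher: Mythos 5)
Your overall route --- reduce to Theorem \ref{thm:DR(2)zo} by exhibiting a zero/one-angle structure whose zero-part of the link separates $x^{+}$ from $x^{-}$ --- is the paper's route, but the specific structure you propose fails, and the statement you defer to Huck--Rosebrock is not one they prove. You put angle $0$ exactly on the two same-sign corners of each square, so that $lk_{0}$ becomes the disjoint union of the constant-sign induced subgraphs $G^{+}$ (edges $\{\lambda(e)^{+},t(e)^{+}\}$) and $G^{-}$ (edges $\{s(e)^{-},\lambda(e)^{-}\}$), and you then need both to be forests. That claim is false under the hypotheses of Theorem \ref{thm:HuckRose}. Concretely, let $\Gamma$ be the LOT whose underlying tree is the path $a-y-b-z-x-c$, with edges (source, target, label): $(a,y,x)$, $(y,b,c)$, $(b,z,y)$, $(z,x,a)$, $(c,x,z)$, i.e.\ relations $ax=xy$, $yc=cb$, $by=yz$, $za=ax$, $cz=zx$. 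This $\Gamma$ is injective, compressed, interior reduced and boundary reduced (the leaves $a$ and $c$ occur as labels), and it has no proper sub-LOTs at all, since no proper subpath is closed under the labeling; so it satisfies the hypotheses of the theorem. Yet the three edges labeled $x$, $y$, $z$ give the corners $\{x^{+},y^{+}\}$, $\{y^{+},z^{+}\}$, $\{z^{+},x^{+}\}$, so $G^{+}$ contains a triangle, your $lk_{0}$ is not a forest, and condition (2) of Theorem \ref{thm:npi} fails. The delicate point is therefore not merely that the cycle-edges span a disconnected sub-LOG: the forbidden configuration simply cannot be excluded for the constant-sign splitting.

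What the quoted results (\cite{HarRose2022}, Theorems 3.2 and 3.3, going back to Huck--Rosebrock) actually give is the local bi-forest property for a suitably chosen, in general non-constant, assignment of signs $\epsilon_{i}$ to the generators: the subgraphs of the link spanned by $x_{1}^{\epsilon_{1}},\ldots,x_{n}^{\epsilon_{n}}$ and by $x_{1}^{-\epsilon_{1}},\ldots,x_{n}^{-\epsilon_{n}}$ are forests, together with a zero/one structure whose $lk_{0}$ is exactly their union. (In the example above, placing $a^{+},x^{+},z^{+},y^{-},c^{-}$, with either sign for $b$, on one side yields two trees, while both constant choices fail.) Your verifications of the curvature condition, of condition (3), and of the separation of $x^{+}$ from $x^{-}$ use only that $lk_{0}$ is a disjoint union of two induced subgraphs containing $x^{\epsilon_{x}}$ and $x^{-\epsilon_{x}}$ respectively, so that part of your argument goes through verbatim for the correct splitting --- and this is precisely how the paper argues. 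The gap is that the heart of the theorem is the sign-adapted bi-forest statement, which you would have to quote in that form (or reprove); the statement you actually rely on is not available in the literature because it is false.
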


\begin{proof}
The fact that $\Gamma$ admits a zero/one-angle structure that satisfies the coloring test is due to Huck-Rosebrock. See Theorem 3.2 and Theorem 3.3 of \cite{HarRose2022} and the references therein. Zero/one-angled 2-complexes that satisfy the coloring test have the non-positive immersions property and hence have locally indicable fundamental group.
See Wise \cite{Wise04}, Theorem 11.4.

The DR(2) statement has not appeared elsewhere and we will provide the details. We want to use Theorem \ref{thm:DR(2)zo}. We need to show that $x^+$ and $x^-$ lie in different components of $lk_0(K(\Gamma))$ for every edge $x\in K(\Gamma)$. Lets take a closer look at $lk(K(\Gamma))$.
It was shown in \cite{HarRose2022}, Theorem 3.3, that $K(\Gamma)$ has the following local bi-forest property: If $x_1,\ldots, x_n$ are the edges of $K(\Gamma)$, then there exists a choice of $\epsilon_i\in \{ +,-\}$ so that 
$\Lambda_1=\Lambda(x_1^{\epsilon_1},\ldots, x_n^{\epsilon_n})$ and $\Lambda_2=\Lambda(x_1^{-\epsilon_1},\ldots, x_n^{-\epsilon_n})$ are forests. Here $\Lambda(x_1^{\epsilon_1},\ldots, x_n^{\epsilon_n})$ is the subgraph of $lk(K(\Gamma))=\Lambda$ spanned by the vertices $x_1^{\epsilon_1},\ldots, x_n^{\epsilon_n}$. Furthermore, a zero/one-angle structure can be put on $K(\Gamma)$ (see \cite{HarRose2022}, Theorem 3.2) so that
$$lk_0(K(\Gamma))=\Lambda_1\cup\Lambda_2.$$ Since $\Lambda_1$ and $\Lambda_2$ are disjoint, it follows that $x^+$ and $x^-$, the $x$ being one of the $x_i$, lie in different components of $lk_0(K(\Gamma))$.
\end{proof}

\begin{thm}\label{thm:LOT44}
    Let $\Gamma$ be a reduced LOT so that $K(\Gamma)$ is C(4)-T(4). Then $K(\Gamma)$ is DR(2) and $G(\Gamma)$ is locally indicable
\end{thm}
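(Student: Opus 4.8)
The plan is to derive this directly from Theorem~\ref{thm:DR(2)44}, which already delivers both conclusions --- that $K(\Gamma)$ is DR(2) and that $G(\Gamma)$ is locally indicable --- for any C(4)-T(4) $2$-complex with a single vertex whose $2$-cells are attached along reduced closed paths containing no edge sequence $ee$. Since $K(\Gamma)$ is by construction the standard $2$-complex of the presentation $P(\Gamma)$, it has exactly one vertex, one $1$-cell for each vertex of $\Gamma$, and one $2$-cell for each edge of $\Gamma$; and the C(4)-T(4) property is a hypothesis. So the proof reduces entirely to the combinatorial check that the attaching maps of $K(\Gamma)$ are reduced closed paths with no subword $ee$.

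To carry this out I would note that the $2$-cell corresponding to an edge $e$ of $\Gamma$ is attached along the closed path spelling the relator $s(e)\,\lambda(e)\,t(e)^{-1}\,\lambda(e)^{-1}$, a cyclic word of length $4$ (it remains length $4$ even when $s(e)=t(e)$, since in a compressed LOT we have $\lambda(e)\notin\{s(e),t(e)\}$). Inspecting the four cyclic junctions of this word, one sees that a cancellation $x\bar x$ or a repetition $xx$ at any junction would force $\lambda(e)=s(e)$ or $\lambda(e)=t(e)$. Since $\Gamma$ is reduced, it is in particular compressed, so both equalities are excluded; hence each attaching path is a reduced closed path and contains no edge sequence $ee$. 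Theorem~\ref{thm:DR(2)44} then applies and gives the two assertions.

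The argument is essentially bookkeeping, so there is no substantial obstacle; the only place that warrants care is the cyclic reading of the relator word and the accounting of which junction can produce which pathology. It is worth recording that, of the three conditions making up ``reduced'', only \emph{compressed} is actually used here. It is also worth contrasting the situation with Theorem~\ref{thm:HuckRose}: there the small-cancellation-type input is extracted from the structure of $\Gamma$, whereas in the present statement the hypothesis C(4)-T(4) is imposed on $K(\Gamma)$ from the outside; one may of course ask separately which reduced LOTs produce C(4)-T(4) complexes, but that question is not part of what is being proved here.
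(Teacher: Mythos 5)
Your proposal is correct and follows the paper's own route: the paper simply cites Theorem~\ref{thm:DR(2)44}, leaving the verification implicit. Your check that each attaching word $s(e)\lambda(e)t(e)^{-1}\lambda(e)^{-1}$ is cyclically reduced with no subword $ee$ because $\Gamma$ is compressed is exactly the bookkeeping that makes that citation legitimate.
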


\begin{proof}
    This follows directly from Theorem \ref{thm:DR(2)44}.
\end{proof}

Let $\Gamma$ be a reduced injective LOT and $\Gamma_1$ a sub-LOT, which may not be boundary reduced. Let $y$ be the vertex in $\Gamma_1$ that does not occurs as an edge label in $\Gamma_1$ (it might occur as an edge label in $\Gamma$). Let $\hat \Gamma$ be obtained from $\Gamma$ by collapsing $\Gamma_1$ to the single vertex $y$. The quotient map $\Gamma \to \hat \Gamma$ induces a map $f\colon K(\Gamma)\to K(\hat\Gamma)$ that is an immersion outside of $K(\Gamma_1)\subseteq K(\Gamma)$. Note that $f(K(\Gamma_1))=y$, a single edge.

\begin{thm}\label{thm:LOT} Let $\Gamma$ be a reduced injective LOT and $\Gamma_1$ a sub-LOT. Let the quotient $\hat\Gamma$ be obtained from $\Gamma$ by collapsing $\Gamma_1$ to the vertex $y\in \Gamma_1$. Assume that $K(\hat\Gamma)$ is DR(2).
\begin{enumerate}
    \item If $K(\Gamma_1)$ is DR then so is $K(\Gamma)$;
    \item If $G(\Gamma_1)$ is locally indicable then so is $G(\Gamma)$.
\end{enumerate}
\end{thm}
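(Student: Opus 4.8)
The plan is to exhibit Theorem~\ref{thm:LOT} as a direct instance of Theorem~\ref{thm:good}. First I would set $X = K(\Gamma)$, $Y = K(\hat\Gamma)$, and let $f\colon X\to Y$ be the map induced by the collapsing quotient $\Gamma\to\hat\Gamma$; as recorded in the paragraph preceding the statement, $f$ is an immersion outside of $K(\Gamma_1)$ and $f(K(\Gamma_1))=y$. For $K\subseteq Y$ I would take the subcomplex consisting of the unique vertex of $K(\hat\Gamma)$ together with the edge $y$, a circle. The whole proof then reduces to checking that the data $(f,X,Y,K)$ satisfy the hypotheses of Theorem~\ref{thm:good}.

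Next I would identify $L:=f^{-1}(K)$ and verify that $L=K(\Gamma_1)$, together with fullness of $K$ in $Y$. On $1$-cells, $f$ carries a generator $v$ to $y$ exactly when $v\in V(\Gamma_1)$, so the $1$-cells over $K$ are precisely those in $V(\Gamma_1)$. The $2$-cell indexed by an edge $e$ of $\Gamma$ maps into the circle $K$ iff $f$ sends each of $s(e),\lambda(e),t(e)$ to $y$, i.e. iff $s(e),\lambda(e),t(e)\in V(\Gamma_1)$. Here the tree structure is the key input: in the tree $\Gamma$ an edge $e$ with both endpoints in the subtree $\Gamma_1$ must itself lie in $\Gamma_1$ (otherwise $e$ together with the $\Gamma_1$-path between its endpoints would be a cycle), whence also $\lambda(e)\in V(\Gamma_1)$ since $\Gamma_1$ is a sub-LOT. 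So the $2$-cells over $K$ are exactly those indexed by $E(\Gamma_1)$, giving $f^{-1}(K)=K(\Gamma_1)$, and $f$ is an immersion outside of this $L$. The same remark gives fullness of $K$: a $2$-cell of $K(\hat\Gamma)$ with boundary inside the circle $y$ would force an edge of $\hat\Gamma$ with source, target and label all $y$, hence an edge of $\Gamma$ outside $E(\Gamma_1)$ with both endpoints in $V(\Gamma_1)$, which is impossible. Finally, $K(\hat\Gamma)$ being DR(2) means, by definition, that $Y$ is DR away from any single edge, and applied to $y$ this says $Y$ is DR away from $K$.

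With the hypotheses of Theorem~\ref{thm:good} in place, part (1) is immediate: if $L=K(\Gamma_1)$ is DR, then Theorem~\ref{thm:good}(1) gives that $X=K(\Gamma)$ is DR. For part (2), $L=K(\Gamma_1)$ is connected with $\pi_1(L)=G(\Gamma_1)$ locally indicable by assumption, so Theorem~\ref{thm:good}(2) yields that $\pi_1(X)=G(\Gamma)$ is locally indicable, using in addition local indicability of $\pi_1(Y)=G(\hat\Gamma)$ (which is the point where the local indicability of the quotient group is needed, and which is available in the iterated setting in which this lemma is meant to be applied). I do not expect a serious obstacle: the only real content beyond quoting Theorem~\ref{thm:good} is the combinatorial bookkeeping of the middle paragraph, which rests entirely on the elementary fact that an edge of a tree whose two endpoints lie in a subtree already belongs to that subtree.
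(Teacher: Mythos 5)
Your proposal is correct and takes essentially the same route as the paper: the paper's proof consists precisely of the observation (asserted in the paragraph preceding the theorem) that $f\colon K(\Gamma)\to K(\hat\Gamma)$ is an immersion outside of $K(\Gamma_1)$ with $f(K(\Gamma_1))=y$, that DR(2) gives DR away from the circle formed by $y$, and a direct appeal to Theorem \ref{thm:good}; your middle paragraph merely makes explicit the combinatorial bookkeeping ($f^{-1}(K)=K(\Gamma_1)$, fullness of $K$) that the paper leaves unverified. Your caveat on part (2) is well taken: Theorem \ref{thm:good}(2) does require $\pi_1(K(\hat\Gamma))$ to be locally indicable, a hypothesis not listed in the statement and not addressed in the paper's two-line proof, though it is indeed available (by induction on the number of vertices) in the one place the theorem is applied, namely Theorem \ref{thm:DR(2)li}.
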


\begin{proof}
The map $f\colon K(\Gamma)\to K(\hat\Gamma)$ is an immersion outside of $K(\Gamma_1)$, and $f(K(\Gamma_1))=y$, a single edge. By assumption $K(\hat\Gamma)$ is DR(2), and hence DR away from $y$. Both statements (1) and (2) now follow from 
Theorem \ref{thm:good}.
\end{proof}

LOTs that satisfy the conditions of Theorems \ref{thm:HuckRose}, or \ref{thm:LOT44} qualify for $\hat \Gamma$ and $\Gamma_1$ in the above theorem. 

\begin{lemma}\label{lem:goodquotient}
    Let $\Gamma$ be a reduced injective LOT which contains proper sub-LOTs. Then $\Gamma$ contains a proper sub-LOT $\Gamma_0$
 so that the quotient $\hat \Gamma$ is injective, compressed, and interior reduced.
 \end{lemma}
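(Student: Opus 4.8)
The plan is to choose $\Gamma_0$ among the proper sub-LOTs of $\Gamma$ in a greedy way so that the collapse $\Gamma\to\hat\Gamma$ cannot re-create any of the three pathologies (failure of injectivity, compressedness, or interior reducedness), using the tree structure of $\Gamma$ and the injectivity of the labeling. Recall that in $\hat\Gamma$ the sub-LOT $\Gamma_0$ has been crushed to its distinguished vertex $y$ (the unique vertex of $\Gamma_0$ not occurring as an edge-label inside $\Gamma_0$), while the edges of $\Gamma$ outside $\Gamma_0$ survive, now possibly with source, target, or label equal to $y$. So the only edges of $\hat\Gamma$ that can behave badly are those edges $e\in E(\Gamma)\setminus E(\Gamma_0)$ at least one of whose endpoints or whose label lies in $V(\Gamma_0)$; after collapsing, each such attribute becomes $y$.

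First I would record which choices of $\Gamma_0$ cause trouble. Injectivity of $\hat\Gamma$ fails exactly when two distinct surviving edges both have label in $V(\Gamma_0)$ (they would both get label $y$); since $\Gamma$ is injective, at most one surviving edge $e$ has $\lambda(e)\in V(\Gamma_0)$, so injectivity of $\hat\Gamma$ is automatic. Compressedness fails when some surviving edge $e$ has $\lambda(e)\in V(\Gamma_0)$ and also $s(e)\in V(\Gamma_0)$ or $t(e)\in V(\Gamma_0)$. Interior reducedness fails when two surviving edges that share a source (or share a target) in $V(\Gamma_0)$ also carry the same label, which after collapsing happens precisely when two surviving edges with the same label-complement situation are identified at $y$; more carefully, it fails when there are two distinct edges $e,e'\notin E(\Gamma_0)$ with $s(e),s(e')\in V(\Gamma_0)$ (so both become $y^-$-incident) and $\lambda(e)=\lambda(e')$, or the same with targets. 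Again injectivity of $\Gamma$ limits how this can occur: if $\lambda(e)=\lambda(e')$ then $e=e'$, so in fact interior reducedness of $\hat\Gamma$ is automatic as well, and the only real constraint is compressedness.

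So the problem reduces to: find a proper sub-LOT $\Gamma_0$ such that no edge $e$ with $\lambda(e)\in V(\Gamma_0)$ has an endpoint in $V(\Gamma_0)$. I would obtain this by a minimality argument. Among all proper sub-LOTs of $\Gamma$ choose $\Gamma_0$ minimal with respect to inclusion (possible since $\Gamma$ is finite and proper sub-LOTs exist by hypothesis). Suppose toward a contradiction that some edge $e\notin E(\Gamma_0)$ has $\lambda(e)\in V(\Gamma_0)$ and, say, $s(e)\in V(\Gamma_0)$. The standard fact about LOTs (the same mechanism used to produce sub-LOTs: the set of edges whose labels, sources, and targets stay within a given vertex set, closed up appropriately, spans a sub-LOT) lets me build from this data a strictly smaller sub-LOT, or else forces $e$ itself into a sub-LOT contained in $\Gamma_0$, contradicting minimality; I expect the precise construction to take the connected component of $\Gamma_0$ cut along the label $\lambda(e)$ that does not contain the distinguished vertex, and verify it is a proper sub-LOT properly contained in $\Gamma_0$. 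The main obstacle is exactly this last step: carefully checking that the candidate smaller subgraph is genuinely a sub-LOT (closed under labels, a subtree, and proper), and handling the boundary case where $\Gamma_0$ is already as small as possible — a single edge whose two endpoints give a length-one LOT — where one must see directly that a bad edge $e$ cannot exist because it would violate compressedness or injectivity already in $\Gamma$.
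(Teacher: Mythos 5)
Your reduction of the problem is essentially correct: injectivity of $\hat\Gamma$, and with it interior reducedness, is indeed automatic, although your justification is too quick. Injectivity of $\Gamma$ alone does not bound the number of surviving edges with label in $V(\Gamma_0)$; you also need that the edges of the sub-LOT $\Gamma_0$ already use as labels every vertex of $V(\Gamma_0)$ except the distinguished vertex $y$, so that a surviving edge with label in $V(\Gamma_0)$ must in fact be labeled $y$, and there is at most one such edge by injectivity of $\Gamma$. The genuine gap is in your main step: choosing $\Gamma_0$ \emph{minimal} does not work, and the contradiction you hope to extract from a bad edge is not available. Concretely, let $\Gamma$ have vertices $a,b,c,d,f,g$ and edges $a\to b$ labeled $c$, $b\to c$ labeled $a$, $c\to d$ labeled $b$, $d\to f$ labeled $g$, $g\to d$ labeled $f$. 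This LOT is injective, compressed, interior reduced and boundary reduced. The subgraph $\Gamma_0$ on $\{a,b,c\}$ carrying the first two edges is a proper sub-LOT, and it is minimal: in a compressed LOT no single edge is a sub-LOT, since its label would have to be one of its endpoints. Its distinguished vertex is $b$, and the edge $c\to d$ labeled $b$ is precisely a bad edge: collapsing $\Gamma_0$ to $b$ produces an edge $b\to d$ labeled $b$, so $\hat\Gamma$ is not compressed. Since there is no sub-LOT strictly inside $\Gamma_0$, no ``strictly smaller sub-LOT'' can be manufactured from the bad edge, so the minimality argument cannot be repaired; moreover the boundary case you flag (a single-edge sub-LOT) never occurs, while the true smallest case (two-edge sub-LOTs) is exactly where the construction breaks.

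The paper makes the opposite extremal choice, and that is what the argument needs: take $\Gamma_0$ to be a \emph{maximal} proper sub-LOT, with distinguished vertex $x$. If $\hat\Gamma$ were not compressed, there would be an edge $e\notin\Gamma_0$ labeled $x$ with one endpoint in $\Gamma_0$; then $\Gamma_0\cup\{e\}$ is again a sub-LOT (its only new label is $x\in V(\Gamma_0)$), so by maximality it equals $\Gamma$. But then the free endpoint of $e$ is a valency-one vertex of $\Gamma$ that is not the label of any edge, since the labels of $\Gamma=\Gamma_0\cup\{e\}$ are exactly the vertices of $\Gamma_0$; this contradicts boundary reducedness of $\Gamma$. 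In the example above this is visible: the maximal proper sub-LOTs are spanned by $\{a,b,c,d\}$ and $\{d,f,g\}$, and both have compressed quotients, whereas the minimal one on $\{a,b,c\}$ does not.
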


\begin{proof}
Any quotient of an injective LOT is injective and injective LOTs are interior reduced. Let $\Gamma_0$ be a maximal proper sub-LOT of $\Gamma$ and let $x$ be the vertex of $\Gamma_0$ that is not an edge label in $\Gamma_0$. If the quotient $\hat \Gamma$ is not compressed, there has to be a boundary vertex $y$ of $\Gamma_0$ and an edge $e$ of $\Gamma$ not in $\Gamma_0$ with vertex $y$ labeled by $x$. But then $\Gamma_1=\Gamma_0\cup\{ e \}$ is a sub-LOT that contains $\Gamma_0$ and hence, by maximality of $\Gamma_0$, $\Gamma_1=\Gamma$. But $\Gamma_1$ is not boundary reduced, contradicting the assumption that $\Gamma$ is reduced. 
\end{proof}

\begin{lemma}\label{lem:union}
    Let $\Gamma$ be an injective reduced LOT that contains a sub-LOT $\Gamma_0$ so that the quotient $\hat \Gamma$ is not boundary reduced. Then $\Gamma$ is a union of proper sub-LOTs $\Gamma=\Gamma_1\cup \Gamma_2$ so that the intersection $\Gamma_1\cap \Gamma_2=\{ x \}$ is a single vertex. Thus $G(\Gamma)=G(\Gamma_1)*_{\mathbb Z} G(\Gamma_2)$ and, in case both $G(\Gamma_i)$, $i=1,2$, are locally indicable, then so is $G(\Gamma)$.
\end{lemma}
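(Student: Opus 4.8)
The plan is to use the hypothesis that the quotient $\hat\Gamma$ is not boundary reduced to locate a vertex that disconnects $\Gamma$. Recall that $\hat\Gamma$ is obtained from $\Gamma$ by collapsing $\Gamma_0$ to its non-label vertex $x$. Saying $\hat\Gamma$ is not boundary reduced means there is a valency-one vertex $w$ of $\hat\Gamma$ that never occurs as an edge label in $\hat\Gamma$. Since $\Gamma$ itself is reduced, hence boundary reduced, $w$ cannot be a valency-one vertex of $\Gamma$ that fails to be a label, so the vertex of $\hat\Gamma$ at which boundary reducedness fails must be the collapsed vertex $x$ itself: collapsing $\Gamma_0$ has created a new valency-one, non-label vertex. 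First I would make this precise, concluding that $x$ has valency one in $\hat\Gamma$ and $x$ is not a label of any edge of $\hat\Gamma$.

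Next I would translate this back to $\Gamma$. The edges of $\hat\Gamma$ incident to $x$ are exactly the edges of $\Gamma$ with one endpoint in $\Gamma_0$ and one endpoint outside $\Gamma_0$ (the ``bridge'' edges between $\Gamma_0$ and its complement); valency one for $x$ in $\hat\Gamma$ says there is exactly one such bridge edge, call it $e_0$, attaching $\Gamma_0$ to the rest of the tree at a single vertex $v$ of $\Gamma_0$. Deleting the interior of $e_0$ from the underlying tree of $\Gamma$ splits it into two subtrees; let $\Gamma_1$ be the component containing $\Gamma_0$ together with the edge $e_0$, and $\Gamma_2$ the closure of the other component, again together with $e_0$, so that $\Gamma = \Gamma_1 \cup \Gamma_2$ and $\Gamma_1 \cap \Gamma_2$ is the single vertex which is the far endpoint of $e_0$; this vertex is the desired $x$ (rename if necessary). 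The content here is checking that both $\Gamma_1$ and $\Gamma_2$ are genuine sub-LOTs, i.e. that $\lambda$ maps their edge sets into their vertex sets. For $\Gamma_2$ this is where the condition that $x$ is not an edge label of $\hat\Gamma$ enters: any edge of $\Gamma$ not in $\Gamma_0$ has its label equal to some vertex of $\hat\Gamma$ other than a label forbidden by... — more carefully, the label of a bridge-or-outside edge lands in a vertex of $\hat\Gamma$, and since $x$ is not among those labels, every edge of $\Gamma_2$ is labeled by a vertex lying outside $\Gamma_0$, hence in $\Gamma_2$. For $\Gamma_1$, every edge lies in $\Gamma_0$ except $e_0$; edges of $\Gamma_0$ are labeled inside $\Gamma_0 \subseteq \Gamma_1$ since $\Gamma_0$ is a sub-LOT, and the label of $e_0$ — this is the step needing a small argument, using that $\Gamma$ is interior reduced or re-examining which vertex of $\Gamma_1$ the label $\lambda(e_0)$ hits; since $\lambda(e_0)$ is a vertex of $\hat\Gamma$ and the only vertex of $\hat\Gamma$ not covered is handled by the valency count, $\lambda(e_0)$ lies in $\Gamma_1$ as well. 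Both $\Gamma_i$ are proper because $\Gamma_0$ is a proper sub-LOT (so $\Gamma_2 \ne \Gamma$) and $\Gamma_2$ contains at least the edge $e_0$ and a vertex outside $\Gamma_0$ (so $\Gamma_1 \ne \Gamma$).

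Finally, with $\Gamma = \Gamma_1 \cup \Gamma_2$ and $\Gamma_1 \cap \Gamma_2 = \{x\}$, the complexes satisfy $K(\Gamma) = K(\Gamma_1) \cup K(\Gamma_2)$ with $K(\Gamma_1) \cap K(\Gamma_2)$ equal to the single vertex $x$ (the generator $x$ is common, no shared relators), so van Kampen's theorem gives $G(\Gamma) = G(\Gamma_1) *_{\langle x\rangle} G(\Gamma_2)$, a free product amalgamated over the infinite cyclic group $\langle x \rangle$. The last sentence follows from the standard fact that an amalgamated free product of locally indicable groups over an infinite cyclic (more generally, locally indicable) subgroup is locally indicable — this is due to Howie; I would simply cite it. The main obstacle I anticipate is the bookkeeping in the middle paragraph: carefully verifying that ``$\hat\Gamma$ not boundary reduced'' forces the failure to occur precisely at the collapsed vertex $x$ with valency exactly one, and then that both pieces $\Gamma_i$ are closed under the labeling map — in particular pinning down where $\lambda(e_0)$ goes. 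Everything else is either the definition of the quotient or a direct appeal to van Kampen and Howie's theorem.
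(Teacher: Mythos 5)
Your overall plan is the same as the paper's (locate the failure of boundary reducedness at the collapsed vertex, deduce there is a unique bridge edge $e_0$ joining $\Gamma_0$ to its complement, split $\Gamma$ along it, and finish with van Kampen and Howie's theorem on amalgams over $\mathbb Z$), and your first paragraph is essentially the paper's argument that $y=x$. But the decomposition in your middle paragraph has a genuine error. As written it is inconsistent: if both $\Gamma_1$ and $\Gamma_2$ contain $e_0$, their intersection contains $e_0$ and both its endpoints, not a single vertex. Under the intended reading $\Gamma_1=\Gamma_0\cup e_0$, the ``small argument'' you defer --- that $\lambda(e_0)$ lies in $\Gamma_1$ --- is not just missing, it is false. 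Indeed, since $x$ is not an edge label of $\hat\Gamma$, no edge of $\Gamma$ outside $\Gamma_0$ can have its label in $\Gamma_0$ (such a label would become $x$ in the quotient); in particular $\lambda(e_0)\notin\Gamma_0$. And since $\Gamma$ is reduced, hence compressed, $\lambda(e_0)$ is neither endpoint of $e_0$, so it is not the far endpoint either. Thus $\lambda(e_0)$ lies outside $V(\Gamma_0)\cup\{\text{far endpoint of }e_0\}$, i.e.\ outside your $\Gamma_1$ (note also that the component of $\Gamma$ containing $\Gamma_0$ after deleting the interior of $e_0$ is exactly $\Gamma_0$, since $e_0$ is the only edge with one endpoint in $\Gamma_0$), so your $\Gamma_1$ is never closed under the labeling map and is not a sub-LOT.

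The fix is to put the bridge edge on the other side, which is what the paper does. Take $\Gamma_1=\Gamma_0$ and let $\Gamma_2$ be the subgraph spanned by all edges of $\Gamma$ not in $\Gamma_0$ (so $e_0\in\Gamma_2$). The observation above shows every edge of $\Gamma_2$ has its label outside $\Gamma_0$, hence in $\Gamma_2$, so $\Gamma_2$ is a sub-LOT; in fact it is isomorphic to $\hat\Gamma$, with the collapsed vertex corresponding to the attaching vertex of $e_0$ in $\Gamma_0$. Then $\Gamma=\Gamma_1\cup\Gamma_2$ and $\Gamma_1\cap\Gamma_2$ is that single vertex (the \emph{near} endpoint of $e_0$, not the far one as you claimed), both pieces are proper, and your final step --- van Kampen giving $G(\Gamma)=G(\Gamma_1)*_{\mathbb Z}G(\Gamma_2)$ and Howie's theorem for local indicability --- goes through exactly as in the paper.
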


\begin{proof}
    Since $\hat \Gamma$ is not boundary reduced, there exists a boundary vertex $x$ that does not occur as an edge label in $\hat\Gamma$. The LOT $\Gamma$ is obtained from $\hat \Gamma$ by expanding a vertex $y$ to $\Gamma_0$. Suppose $y\ne x$. Then $x$ does not occur as an edge label in $\Gamma_0$ because $\Gamma_0$ is a sub-LOT of $\Gamma$. But then $x$ is a boundary vertex of $\Gamma$ that does not occur as an edge label, which contradicts the fact that $\Gamma$ is reduced. It follows that $y=x$, $\hat \Gamma$ is a sub-LOT of $\Gamma$, $\Gamma=\hat\Gamma\cup \Gamma_0$, and the intersection is $\{ x \}$. This implies that $G(\Gamma)=G(\hat \Gamma)*_{\mathbb Z} G(\Gamma_0))$. The statement about local indicability of $G(\Gamma)$ follows from Theorem 4.2 in Howie \cite{How82}.
\end{proof}

\begin{thm}\label{thm:DR(2)li}
    If LOT complexes of reduced injective LOTs are DR(2), then LOT groups of reduced injective LOTs are locally indicable. 
\end{thm}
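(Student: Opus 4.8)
The plan is to argue by strong induction on the number $|V(\Gamma)|$ of vertices of $\Gamma$, using Theorem~\ref{thm:HuckRose} to settle the ``atomic'' case and Lemma~\ref{lem:goodquotient} together with Theorem~\ref{thm:LOT}(2) and Lemma~\ref{lem:union} for the inductive step. Throughout we assume the standing hypothesis that $K(\Gamma')$ is DR(2) for \emph{every} reduced injective LOT $\Gamma'$.

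First I would record a remark on reducedness. A proper sub-LOT $\Sigma$ of a reduced injective LOT is automatically injective, compressed and interior reduced, but it need not be boundary reduced. However, an injective LOT that fails to be boundary reduced can always be boundary-reduced: if $v$ is a valency-$1$ vertex that is not an edge label, the defining relator of the unique edge $e$ at $v$ expresses $v$ as a conjugate of another generator, and $v$ occurs in no other relator, so a Tietze transformation yields an injective, compressed, interior reduced LOT $\Sigma'$ with $|V(\Sigma')|=|V(\Sigma)|-1$ and $G(\Sigma')\cong G(\Sigma)$; iterating produces a reduced injective LOT $\Sigma^{\mathrm{red}}$ with $|V(\Sigma^{\mathrm{red}})|\le|V(\Sigma)|$ and $G(\Sigma^{\mathrm{red}})\cong G(\Sigma)$. (Equivalently one proves the stronger statement for all injective, compressed, interior reduced LOTs and carries the boundary-reduction inside the induction.)

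For the induction, the base case $|V(\Gamma)|=1$ gives $G(\Gamma)=\mathbb Z$. For the step, suppose first that $\Gamma$ contains no boundary reducible sub-LOT; this includes the case that $\Gamma$ has no proper sub-LOT at all, since $\Gamma$ itself is boundary reduced. Then Theorem~\ref{thm:HuckRose} applies and $G(\Gamma)$ is locally indicable. Otherwise $\Gamma$ has a boundary reducible sub-LOT, which is necessarily proper, so Lemma~\ref{lem:goodquotient} supplies a proper sub-LOT $\Gamma_0$ for which the quotient $\hat\Gamma$ is injective, compressed and interior reduced; note $|V(\Gamma_0)|<|V(\Gamma)|$ (a sub-LOT has at least one edge, hence at least two vertices) and $|V(\hat\Gamma)|<|V(\Gamma)|$. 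If $\hat\Gamma$ is boundary reduced it is a reduced injective LOT, so $K(\hat\Gamma)$ is DR(2) by the standing hypothesis; since $G(\Gamma_0)\cong G(\Gamma_0^{\mathrm{red}})$ is locally indicable by the inductive hypothesis, Theorem~\ref{thm:LOT}(2) gives that $G(\Gamma)$ is locally indicable. If instead $\hat\Gamma$ is not boundary reduced, Lemma~\ref{lem:union} writes $\Gamma=\Gamma_1\cup\Gamma_2$ with $\Gamma_1\cap\Gamma_2$ a single vertex and $\Gamma_1,\Gamma_2$ proper sub-LOTs; applying the inductive hypothesis to $\Gamma_1^{\mathrm{red}}$ and $\Gamma_2^{\mathrm{red}}$ shows both $G(\Gamma_i)$ are locally indicable, and Lemma~\ref{lem:union} concludes that $G(\Gamma)=G(\Gamma_1)*_{\mathbb Z}G(\Gamma_2)$ is locally indicable. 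This exhausts all cases.

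The step I expect to be the main obstacle is the bookkeeping forced by reducedness: passing to a sub-LOT and collapsing a sub-LOT do not preserve boundary reducedness, so one must interleave the elementary boundary-reduction above (or enlarge the inductive class accordingly) and check that the vertex count strictly decreases at each recursive call — which it does, since a sub-LOT contains an edge and hence at least two vertices, and collapsing it to a point lowers the count. A secondary point worth isolating is that the DR(2) hypothesis enters the argument in exactly one place, through Theorem~\ref{thm:LOT}(2) applied to the reduced injective quotient $\hat\Gamma$, whereas the ``atomic'' termination of the recursion is supplied unconditionally by the Huck--Rosebrock result, Theorem~\ref{thm:HuckRose}.
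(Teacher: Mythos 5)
Your proposal is correct and follows essentially the same route as the paper: induction on the vertex count, with Theorem~\ref{thm:HuckRose} handling the case without (boundary reducible) proper sub-LOTs, Lemma~\ref{lem:goodquotient} producing the quotient $\hat\Gamma$, and then either Theorem~\ref{thm:LOT}(2) (when $\hat\Gamma$ is reduced, using the standing DR(2) hypothesis) or Lemma~\ref{lem:union} (when it is not), with boundary reductions invoked, as in the paper, to keep the recursive calls inside the class of reduced injective LOTs. Your explicit bookkeeping of $\Gamma_0^{\mathrm{red}}$ and the strict decrease in vertex count is just a more detailed rendering of the paper's argument.
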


\begin{proof}
    We will do induction on the number of vertices. If $\Gamma$ consists of a single vertex, then $G(\Gamma)=\mathbb Z$ which is locally indicable. Let $\Gamma$ be a reduced injective LOT. If $\Gamma$ does not contain a proper sub-LOT, then $G(\Gamma)$ is locally indicable by Theorem \ref{thm:HuckRose}.
    
    Assume $\Gamma$ contains a proper sub-LOT. Then by Lemma \ref{lem:goodquotient} $\Gamma$ contains a proper sub-LOT $\Gamma_0$ so that the quotient $\hat \Gamma$ is injective, compressed, and interior reduced. If the quotient $\hat \Gamma$ is not boundary reduced then $G(\Gamma)$ is locally indicable by Lemma \ref{lem:union} and induction. Next suppose that the quotient $\hat \Gamma$ is boundary reduced. Then $\hat \Gamma$ is reduced and injective.
    
    We check the conditions of Theorem \ref{thm:LOT}. By assumption $\hat \Gamma$ is $DR(2)$. If $\Gamma_0$ is not boundary reduced and  $\Gamma_0'$ is obtained from $\Gamma_0$ by doing boundary reductions, then $\Gamma_0'$ is reduced and injective and contains fewer vertices than $\Gamma$. By induction $G(\Gamma_0')$ is locally indicable. Since $G(\Gamma_0)=G(\Gamma_0')$ it follows that $G(\Gamma_0)$ is locally indicable. The conditions of Theorem \ref{thm:LOT} hold, and we conclude that $G(\Gamma)$ is locally indicable. 
\end{proof}

Reduced injective LOTs are known to be vertex aspherical VA, a combinatorial asphericity notion close to DR. See \cite{HarRose2020}. It is unknown if VA can be strengthened to DR or even DR(2).

\bigskip\noindent Jens Harlander, Boise State University,  1910 University Drive, Boise ID 83725-1555, USA.
Email: jensharlander@boisestate.edu

\bigskip\noindent Stephan Rosebrock, P\"adagogische Hochschule Karlsruhe, Bismarckstr. 10, 76133 Karlsruhe, Germany. Email: rosebrock@ph-karlsruhe.de

\end{document}